%% file: AdvectionReaction_v3.tex
\documentclass[onefignum,onetabnum]{siamart190516}
\input{ex_shared}

\ifpdf
\hypersetup{
  pdftitle={Adaptive Dnetwork},
  pdfauthor={}
}
\fi

\DeclareUnicodeCharacter{2212}{-}
\usepackage[utf8]{inputenc}
\usepackage{bm}
\usepackage{mathtools}
\usepackage{indentfirst}
\usepackage{wrapfig}
\usepackage{tcolorbox}
\usepackage{color}
\usepackage[export]{adjustbox}
\usepackage{makecell}
\usepackage{textcomp}
\usepackage{algorithm }
\usepackage{algorithmic}
\usepackage{booktabs}
\usepackage{subcaption} 

\newcommand{\R}{\mathbb{R}}

\usepackage{graphicx}
\usepackage{diagbox}
\usepackage{pifont}
\newcommand{\vertiii}[1]{{\left\vert\kern-0.25ex\left\vert\kern-0.25ex\left\vert #1 
    \right\vert\kern-0.25ex\right\vert\kern-0.25ex\right\vert}}

\newcommand{\bSigma}{\mbox{\boldmath${\Sigma}$}}

\newcommand{\bomega}{\mbox{\boldmath${\omega}$}}
\newcommand{\bxi}{\mbox{\boldmath$\xi$}}

\newcommand{\bff}{{\bf f}}

\newcommand{\bH}{{\bf H}}

\newcommand{\bb}{{\bf b}}

\newcommand{\bx}{{\bf x}}

\newcommand{\cM}{{\cal M}}

\newcommand{\cT}{{\cal T}}

\def\ba{{\bf a}}
\def\bb{{\bf b}}
\def\bc{{\bf c}}
\def\bl{{\bf l}}

\def\br{{\bf r}}
\def\bx{{\bf x}}
\def\by{{\bf y}}

\def\cM{{\cal M}}

\def\cT{{\cal T}}

\begin{document}

\maketitle
\begin{abstract}
The least-squares neural network (LSNN) method introduced in \cite{Cai2021linear} for linear advection-reaction equations is capable of accurately approximating discontinuous solutions without {\it a priori} knowledge of the interface location. However, the resulting discretization is a non-convex optimization problem that is computationally intensive and complex. In this paper, we propose a structure-guided Gauss-Newton (SgGN) method that alternates between the linear (output) and the nonlinear (hidden layer) parameters. At each outer iteration, the linear parameters are computed by a linear solver, and the nonlinear parameters are updated by a modified Gauss-Newton (GN) method that explicitly removes the singularities of the GN matrix.
Numerical experiments for all test problems presented in \cite{Cai2021linear} show that the SgGN method is superior to the Adam optimizer \cite{kingma2015}, the commonly used first-order optimization algorithm, not only in computational cost but, more importantly, in accuracy. 

\end{abstract}

\begin{keywords} Advection-reaction equation, Least-squares method, ReLU neural network, Gauss-Newton method\end{keywords}

\section{Introduction}\label{s:introduction}

Let $\Omega$ be a bounded open domain in ${\R}^2$, and let $\bm{\beta}(\bx) = (\beta_1, \beta_2)^T\in C^1(\bar{\Omega})^2$ and $\gamma \in C(\bar{\Omega})$ be the given advective velocity field and reaction coefficient, respectively. Denote by
\[
\Gamma_- = \{\bx\in\Gamma :\, \bm{\beta}(\bx)\! \cdot\! \bm{n}(\bx) <0\}
\]
the inflow part of the boundary $\Gamma=\partial \Omega$, where $\bm{n}(\bx)$ is the unit outward vector normal to $\Gamma$ at $\bx\in \Gamma$. Without loss of generality, assume that the magnitude of ${\bm \beta}(\bx)$ is one in $\Omega$, i.e., $|{\bm \beta}(\bx)| = 1$ for $\bx \in \Omega$. Consider the following linear advection-reaction equation 
\begin{equation}\label{pde1}
    \left\{\begin{array}{rccl}
    u_{\bm\beta} + \gamma u &= & f &\text{ in }\,\, \Omega, \\
    u&=&g &\text{ on }\,\, \Gamma_{-},
    \end{array}\right.
\end{equation}
where $f \in L^2(\Omega)$ and $g \in L^2(\Gamma_-)$ are given scalar-valued functions, and $u_{\bm\beta}$ is the directional derivative of $u$ along the direction ${\bm\beta}$ defined by 
\begin{equation}\label{dd-d}
    u_{\bm\beta}(\bx) = D_{\bm{\beta}}u(\bx)= \lim_{\tau\to 0} \frac{u(\bx)-u\big(\bx - \tau\bm{\beta}(\bx)\big)}{\tau}.
\end{equation}
Note that \cref{pde1} holds for discontinuous solutions on discontinuity interfaces, while ${\bm \beta}\cdot\nabla u$ is invalid (see \cite{LiuCai2026} for details).

Denote the solution space by 
\[
V_{\bm\beta}= \left\{v\in L^2(\Omega): v_{\bm{\beta}}\in L^2(\Omega)\right\},
\]
equipped with the norm 
\[
 \|v\|_{\bm{\beta}}=\left(\|v\|_{0,\Omega}^2 + \|v_{\bm\beta} \|_{0,\Omega}^2\right)^{1/2}.
 \]
Denote the weighted $L^2(\Gamma_{-})$ norm on the inflow boundary by
 \[
 \|v\|_{0,\bm{\beta},\Gamma_-} 
 =\left( \int_{\Gamma_-} |\bm{\beta}\! \cdot \!\bm{n}|\, v^2\,ds\right)^{1/2}.
 \]
 Introducing the following least-squares functional
 \begin{equation}\label{ls1}
    \mathcal{L}(v;{\bf f}) = \|v_{\bm\beta} +\gamma\, v-f\|_{0,\Omega}^2 +  \|v-g\|_{0,\bm{\beta},\Gamma_-}^2, \quad \forall\,\, v\in V_{\bm\beta}
\end{equation}
with ${\bf f} = (f,g)$, then the least-squares formulation of problem \cref{pde1} is to seek $u\in V_{\bm\beta}$ such that
\begin{equation}\label{minimization1}
    \mathcal{L}(u;{\bf f}) = \min_{ v\in V_{\bm\beta}} \mathcal{L}(v;{\bf f}).
\end{equation}

When the solution is smooth, this formulation was studied in \cite{bochev2001improved, de2004least, bochev2016least}, and the coercivity of the homogeneous least-squares functional was established under the assumption that there exists a positive constant $\gamma_0>0$ such that
\[
\gamma (\bx) - \dfrac12 \nabla\cdot {\bm\beta} \ge \gamma_0, \quad\forall\,\, \bx\in \Omega.
\]

The least-squares ReLU neural network (LSNN) method was introduced in \cite{Cai2021linear} for solving the linear advection-reaction equation in \cref{pde1} with discontinuous solutions. The LSNN is one of the physics-preserving neural network (P$^2$NN) methods \cite{LiuCaiP2NN} and is specially designed for problems without natural minimization principle. Based on the $L^2$-norm least-squares formulation in \cref{ls1}, this method employs ReLU neural networks (NN) as approximating functions and physics-preserving numerical differentiation operator. Without using a priori knowledge of the discontinuity interface location, the LSNN is capable of approximating discontinuous solutions without oscillation and overshooting. The LSNN method was also developed for scalar nonlinear hyperbolic conservation laws (see \cite{Cai2022nonlinear}).

Despite the impressive approximation capabilities of NNs, the resulting discretization leads to a non-convex optimization problem in the NN parameters. The methods of gradient descent type are often slow and likely trap in a local minimum so that the resulting NN approximation is inaccurate (see, e.g., \cite{Cai2021linear}). To overcome this difficulty, recently we have investigated superlinear and/or second-order optimization methods like Gauss-Newton (GN), Newton, etc. For example, in \cite{CaiDokFalHer2024a, CaiDokFalHer2024b} we developed a damped block Newton (dBN) method for one-dimensional elliptic problems. The dBN produces more accurate approximation than 
the adaptive finite element method (see, e.g., \cite{MoNoSi:02, Ve:13}) in a comparable computational cost. For solving the least-squares (LS) problems using a shallow ReLU neural network in multi-dimension, we introduced the structure-guided Gauss-Newton (SgGN) method in \cite{SgGN}. 

In the development of the SgGN method, \cite{SgGN} overcame two essential difficulties: less differentiability of the ReLU function and singularity of the GN matrix. First, the ReLU activation function is not differentiable in a pointwise manner, hence the methods of gradient type are not applicable to the discrete least-squares problems whose loss functions are defined as finite summations. By realizing 
that the ReLU has first-order weak derivatives, this difficulty was overcome by developing the SgGN method for the continuous least-squares problems and then extended to the discrete least-squares problems. Second, the symmetric GN matrix is often singular. The well-known Levenberg-Marquardt (LM) method \cite{Levenberg, Marquardt} deals with this singularity by the regularization technique and ends with a non-GN search direction. The second difficulty was overcome by using the algebraic structure of the shallow ReLU NN to remove all singularities of the GN matrix at each iteration step.
The idea of removing singularities of the Hessian matrix was also used in \cite{CaiDokFalHer2024a, CaiDokFalHer2024b}.

The purpose of this paper is to extend the SgGN method from the LS approximation problem to the LSNN method for solving the two-dimensional linear advection-reaction equation (a first-order partial differential equation), when using shallow ReLU neural networks. This extension encounters the same difficulties as in the LS approximation problem. While the latter is the same, the former is more severe than before since the loss functional contains derivatives of the ReLU function. 
For one-dimensional elliptic differential equations, we were able to compute not only first- but also second-order derivatives of the loss functional on the nonlinear parameters and derived both the GN and the Hessian matrices exactly. This one-dimensional approach \cite{CaiDokFalHer2024a, CaiDokFalHer2024b} is difficult to be extended to multi-dimensions. In this paper, we will first establish the commutativity properties between the gradient on the nonlinear parameters and the integral involving the Heaviside step function (see \cref{commutativity}) and then derive the formula of the GN matrix. Again, the GN matrix has a similar algebraic structure as that of the LS approximation problem, and hence we can remove singularity of the GN matrix at each iteration step to obtain an efficient SgGN method. The SgGN is implemented, and its efficiency and accuracy are demonstrated numerically for all test problems in \cite{Cai2021linear}. In particular, the SgGN achieves much more accurate NN approximations than the Adam optimizer in much less computation.




The paper is structured as follows. \Cref{s:lsnn} introduces shallow ReLU neural networks, the discrete directional derivative, and the LSNN method. \Cref{s:Comm} establishes some commutativity properties between differentiation with respect to NN nonlinear parameters and integration of NN function with respect to independent variable $\bx$. \Cref{s:BNMethod} derives a SgGN method, covering optimality conditions, the construction of the Hessian and Gauss-Newton matrices, and some implementation details. Finally, numerical experiments demonstrating the performance of the proposed method are presented in \Cref{sec:exp}. 

\section{Least-Squares Neural Network Method}\label{s:lsnn}

This section describes the least-squares neural network (LSNN) method introduced in \cite{Cai2021linear}. For a comprehensive description of LSNN, see \cite{LiuCai2026}. Since this paper studies optimization schemes for shallow ReLU neural network (NN) approximation, we only describe LSNN using a shallow NN. 

To this end, for any $\bx = (x_1, x_2)^T \in \Omega$, denote by $\by = (1, x_1, x_2)^T$ the homogeneous coordinates. A shallow ReLU NN with $n$ neurons may be defined as the collection of continuous piecewise linear functions from $\Omega$ to $\mathbb{R}$: 
\begin{equation}\label{network}  
\cM_n(\Omega)=\left\{ c_0+\sum_{i=1}^nc_i\sigma(\br_i \cdot \by) :  c_i\in \R,\: \br_i = (b_i, \bomega_i)^T= (b_i, \omega_{i1}, \omega_{i2})^T, \: \bomega_i\in {\cal S}, \: b_i \in \R \right\},
\end{equation}
where $\sigma(t) = \max\{0, t\}$ is the rectified linear activation function (ReLU) and 
\[
{\cal S}=\left\{\bxi\in \R^2 :\, |\bxi|
=1  \right\}
\]
is the unit circle in $\R^2$. Here, $|\bxi|$ denotes the magnitude of $\bxi$. 
The bias and weights of the output and hidden layers, $\bc =\left(c_0,c_1,\ldots,c_{n}\right)^T$ and $\br  = (\br_1, \dots, \br_n)$, are referred to as the linear and nonlinear parameters, respectively.
The set $\cM_n(\Omega)$ has a striking approximation property for singular and/or discontinuous functions with unknown location. Particularly, it was shown in \cite{Cai2021linear} that a step function with an {\it unknown} straight line interface can be approximated by a function in $\cM_n(\Omega)$ with two neurons for any given accuracy (see \cite{CCL2024} for multi-dimension and hypersurface interface). 
 
Evaluation of the LS functional $\mathcal{L}(v;{\bf f})$ in \cref{ls1} involves differentiation and integration. In practice, the former, the directional derivative, may be approximated by 
\begin{equation}\label{finite_diff}
     v_{\bm\beta}(\bx) \approx D_{\scriptstyle{\bm\beta},\tau}\,v(\mathbf{x})  
     = \frac{v(\bx)-v\big(\bx - \tau {\bm{\beta}}(\bx)\big)}{\tau} 
\end{equation}
for $0<\tau \ll 1$. Equation \cref{finite_diff} defines the discrete directional derivative operator $D_{\scriptstyle{\bm\beta},\tau}$ and an upwind finite difference scheme of the directional derivative with step size $\tau$. For all integration point $\mathbf{x} \in \Omega$ adjacent to $\Gamma_-$, if the step size $\tau$ is properly chosen such that $\bx - \tau {\bm{\beta}}(\bx) \in \Gamma_-$, then \cref{finite_diff} can be used to enforce the inflow boundary condition by setting
\[
v(\bx - \tau {\bm{\beta}}(\bx)) = g_\tau(\mathbf{x}) := g(\bx - \tau {\bm{\beta}}(\bx)).
\]

This approach (see, e.g., \cite{LiuCai2026}) requires to approximate integral by numerical integration first and then to apply the discrete directional derivative at integration points. Note that the integrand of the LS functional involves the Heaviside step function. It is then difficult to compute the gradient of the LS functional evaluated at integration points with respect to the nonlinear parameters. 
To overcome this obstacle, we do not use numerical integration before deriving optimization scheme of the Gauss-Newton type. To this end, let $\tilde{\Omega}_B$ be a strip region of the domain $\Omega$ with the width comparable to the size of numerical integration mesh (see \cref{imp}) such that the inflow boundary lies on the boundary of $\tilde{\Omega}_B$, i.e., $\Gamma_- \subseteq \partial \tilde{\Omega}_B$. Let $ \Omega_I=\Omega\setminus \tilde{\Omega}_B$, then $\Omega = \Omega_I \cup \tilde{\Omega}_B$ is a partition of the domain $\Omega$.


First, the step size $\tau$ is chosen differently in $\Omega_I$ and $\tilde{\Omega}_B$. If $\mathbf{x} \in \Omega_I$, the $\tau$ is chosen as a small positive constant. For $\mathbf{x} \in \tilde{\Omega}_B$, $\tau = \tau(\mathbf{x})$ is selected such that the backward point $\mathbf{x} - \tau\bm{\beta}(\mathbf{x})$ lies on the inflow boundary $\Gamma_{-}$. Consequently, the inflow boundary condition in \cref{pde1} is enforced through the discrete directional derivative for $\bx \in \tilde{\Omega}_B$ as follows
\begin{equation}\label{dd-d2}
D_{{\bm \beta}, \tau } v(\bx) = \frac{v(\bx) - g_\tau(\bx)}{\tau(\bx)}.
\end{equation}
If $\mathbf{x}\in \tilde{\Omega}_B$ is close to the inflow boundary $\Gamma_-$, the $\tau(\mathbf{x})$ could be very small. To avoid unbounded $1/\tau$ for those points, we replace  the domain $\tilde{\Omega}_B$ by a subdomain ${\Omega}_{B} := \tilde{\Omega}_B \setminus \Omega^{\epsilon}_{B}$ (see \cref{fig:domain_decomp}), where $\Omega_B^{\epsilon} := \{\mathbf{x} \in \tilde{\Omega}_{B} : \text{dist}(\mathbf{x}, \Gamma_{-}) \leq \epsilon \}$ is a $\epsilon$ strip subdomain of $\tilde{\Omega}_B$ for a given small $\epsilon > 0$. Clearly, for all $\mathbf{x} \in {\Omega}_B$, we have
\[
\dfrac{1}{\tau(\mathbf{x})} \geq \dfrac{1}{\epsilon}.
\]
Hence, $D_{{\bm \beta}, \tau } v(\bx)$ is well defined in ${\Omega}_{B}$.

\begin{figure}[htbp]
    \centering
    \includegraphics[width=0.3\textwidth]{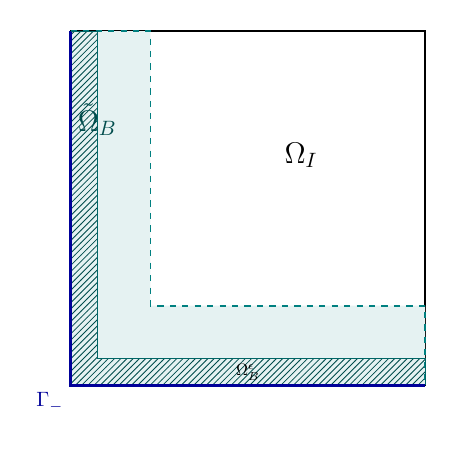}
    \caption{Decomposition of the domain $\Omega$.}
    \label{fig:domain_decomp}
\end{figure}

Next, we introduce a semi-discretized least-squares functional over the computational domain $\Omega_I\cup {\Omega}_B$ as follows
\begin{equation}\label{ls_tau}
    \mathcal{L}_{\tau}(v; {\bf f}) = \mathcal{L}^I_{\tau}(v; f) + \mathcal{L}^B_{\tau}(v; {\bf f}).
\end{equation}
where the functionals $\mathcal{L}^I_{\tau}(v; f)$ and $\mathcal{L}^B_{\tau}(v; {\bf f})$ are given by
\begin{align*}
    \mathcal{L}^I_{\tau}(v; f) = \dfrac12 \|D_{\bm{\beta}, \tau} v + \gamma v - f\|^2_{0, \Omega_I} \quad \text{and} \quad \mathcal{L}^B_{\tau}(v; {\bf f}) = \dfrac12 \|(\tau^{-1} + \gamma)v - \tau^{-1}g_\tau - f\|^2_{0, {\Omega}_B},
\end{align*}
respectively. Now, the LSNN method for problem \cref{pde1} seeks $u_{n, \tau} \in \mathcal{M}_n(\Omega)$ such that 
\begin{equation}\label{discrete_minimization_functional}
    \mathcal{L}_{\tau}(u_{n, \tau}; {\bf  f}) = \min_{v \in \mathcal{M}_n(\Omega)} \mathcal{L}_{\tau}(v; {\bf f}).
\end{equation}


\section{Commutativity}\label{s:Comm} 

To derive the structure-guided Gauss-Newton method, this section develops some commutativity properties between differentiation with respect to NN nonlinear parameters and integration of NN function with respect to spatial variable $\bx$. 

To this end, denote the Heaviside step and the Dirac delta functions by
\begin{equation}\label{H-delta}
    H(t) = \left\{\begin{array}{ll}
    0, & t<0,\\[2mm]
    1/2, & t=0,\\[2mm]
    1, & t> 0,
    \end{array}\right.
    \quad \mbox{ and }\quad 
    \delta(t)=\left\{\begin{array}{ll}
    \infty, & t=0,\\[2mm]    0, & t\neq 0, 
    \end{array}\right. \mbox{ such that } \int^{+\infty}_{-\infty}\delta(t)\,dt=1,
\end{equation}
respectively. It is well-known that the weak derivatives \cite{adams2003sobolev} of $\sigma(t)$ and $H(t)$ are the respective step and delta functions: 
\begin{equation}\label{derivative}
    H(t) = \sigma^\prime(t)
    \quad \mbox{ and }\quad 
    \delta(t) = H^\prime(t). 
\end{equation}

The $\delta(t)$ is a generalized function and may be defined as the limit of a family of step functions as
\begin{equation}\label{delta}
\delta(t)=\lim\limits_{\tau \to 0^+} d_\tau(t) \quad\mbox{with }\,\, d_\tau(t) = \left\{\begin{array}{ll}
        \dfrac{1}{2\tau}, & t \in (-\tau, \tau),\\[3mm]
        0, & \text{otherwise}.
        \end{array}\right.
\end{equation}
For a given continuous function $\varphi: \mathbb{R} \rightarrow \mathbb{R}$ with compact support and for $a, b \in \mathbb{R}$ with $b \neq 0$, the translation and scaling properties \cite{Dirac1930} are valid:  
\begin{equation}\label{deltaIntegration}
    \int_{-\infty}^{\infty}\varphi(t)\delta(t - a)\,dt = \varphi(a) \quad \mbox{and} \quad \int_{-\infty}^{\infty}\varphi(t)\delta(bt)\, dt = \frac{\varphi(0)}{|b|}.
\end{equation}

Let $\hat{b}\in\R$ and $\hat{\br}=(\hat{b}, \hat{\omega}_1,\hat{\omega}_2)^T$ with  
$\hat{\bomega}=(\hat{\omega}_1,\hat{\omega}_2)^T\in \mathcal{S}$, 
each neuron function $\sigma (\hat{\br}\cdot \by)$ in the domain $\Omega$ is a continuous piecewise linear function with one breaking line
\[
{\bf l}_{\hat\br}=\{\bx\in \Omega : \hat{\br}\cdot \by = \hat{b} + \hat{\bomega} \cdot \bx =0\}.
\]
Let $\hat{\ba}$ and $\hat{\bb}$ be the intersections between the line segment ${\bf l}_{\hat\br}$ and the boundary of $\Omega$ such that $\hat{\bomega}$ points to the right-hand side when walking from $\hat{\ba}$ to $\hat{\bb}$. Then a parametrization of the line segment ${\bf l}_{\hat\br}$ is given by
\begin{equation}\label{l}
  {\bf l}_{\hat\br}(t) 
  =\hat{\ba} + t\, (\hat{\bb}-\hat{\ba}) \quad\mbox{for }\, t\in [0, 1].  
\end{equation}
Denote by $\hat{\bomega}^\perp= (-\hat{\omega}_2,\hat{\omega}_1)$ the unit vector orthogonal to $\hat{\bomega}$, then $\hat{\bomega}^\perp$ is parallel to $\hat{\bb}-\hat{\ba}$, i.e., 
\[
\hat{\bb}-\hat{\ba} = \pm |\hat{\bb}-\hat{\ba}|\, \hat{\bomega}^\perp.
\]

\begin{lemma}\label{diracDeltaLemma}
    Let ${\bf l}_{\hat{\br}}$ and ${\bf l}_{\tilde{\br}}$ be two distinct line segments in ${\Omega}$. Assume that the line segment ${\bf l}_{\hat{\br}}$ is non-empty. Let $h(\bx)$ be a continuous or piecewise continuous function with respect to the partition of ${\Omega}$ by the line segment ${\bf l}_{\tilde{\br}}$, then
    \begin{equation}\label{h-d}
    \int_{{\Omega}}h(\bx)\delta(\hat{\br}\cdot \by)\,d\bx = \int_{{\bf l}_{\hat{\br}}} h\,ds, 
    \end{equation}
    where the right-hand side is the line integral of $h(\bx)$ along the line segment ${\bf l}_{\hat{\br}}$.
\end{lemma}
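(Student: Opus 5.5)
The plan is to make the delta integral concrete by a rotation of coordinates adapted to the line ${\bf l}_{\hat\br}$, and then to apply the translation property in \cref{deltaIntegration}. Since $\hat{\bomega}\in\mathcal S$, the pair $(\hat{\bomega},\hat{\bomega}^\perp)$ is an orthonormal basis of $\R^2$. For $\bx\in\Omega$ I will set $t=\hat b+\hat{\bomega}\cdot\bx$ and $s=\hat{\bomega}^\perp\cdot\bx$. This map $\bx\mapsto(t,s)$ is a rigid motion, a rotation followed by a translation in $t$, so its Jacobian has absolute value one and $d\bx=dt\,ds$. In these coordinates $\hat\br\cdot\by=t$, and the line ${\bf l}_{\hat\br}$ is the slice $\{t=0\}$ of the image domain. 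Along $t=0$ the variable $s$ is exactly arclength, because ${\bf l}_{\hat\br}(\cdot)$ in \cref{l} runs in the direction $\pm\hat{\bomega}^\perp$ with unit speed in $s$.

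Next I will write the integral by Fubini as
\[
\int_{\Omega}h\,\delta(\hat\br\cdot\by)\,d\bx=\int_{\R}\left(\int_{\R}\chi_\Omega(t,s)\,h(t,s)\,\delta(t)\,dt\right)ds,
\]
with $h$ extended by zero outside $\Omega$. For each fixed $s$, the translation property in \cref{deltaIntegration} with $a=0$ gives $\chi_\Omega(0,s)\,h(0,s)$. Integrating this in $s$ over $\{s:(0,s)\in\Omega\}$ yields $\int_{{\bf l}_{\hat\br}}h\,ds$, which is \cref{h-d}. Rather than rely on a formal Fubini argument for a generalized function, I will justify this step rigorously through the limit definition \cref{delta}. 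We have $\int_\Omega h\,d_\tau(\hat\br\cdot\by)\,d\bx=\frac1{2\tau}\int_{\R}\int_{-\tau}^{\tau}\chi_\Omega h\,dt\,ds$, and I will pass $\tau\to0^+$ using continuity of $t\mapsto\chi_\Omega h(t,s)$ at $t=0$ together with dominated convergence in $s$. Boundedness of $h$ on the bounded domain $\Omega$ supplies the dominating function.

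The main obstacle is the continuity hypothesis in the inner step. The translation property in \cref{deltaIntegration} needs the integrand to be continuous at $t=0$, and the test function here is neither continuous nor compactly supported: $h$ may jump across ${\bf l}_{\tilde\br}$, and $\chi_\Omega$ jumps at $\partial\Omega$. I will handle this as follows.
\begin{itemize}
\item Because ${\bf l}_{\tilde\br}\neq{\bf l}_{\hat\br}$, the two lines either are parallel and disjoint or cross at a single point. In either case the set of $s$ for which $(0,s)\in{\bf l}_{\tilde\br}$ has at most one element, so it has measure zero in $s$.
\item For every other $s$ with $(0,s)$ in the interior of $\Omega$, the function $h$ is continuous near $(0,s)$. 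The one-dimensional averages $\frac1{2\tau}\int_{-\tau}^{\tau}$ therefore converge to $h(0,s)$.
\item Points where the slice $t=0$ meets $\partial\Omega$ are also finitely many, or at least of measure zero, for a reasonable domain such as a polygon.
\end{itemize}
Dominated convergence then gives the limit $\int h(0,s)\,\chi_\Omega(0,s)\,ds$, which equals the line integral in \cref{h-d}. The non-emptiness assumption on ${\bf l}_{\hat\br}$ only ensures that the right-hand side is a genuine segment integral. If the segment were empty, both sides would vanish.
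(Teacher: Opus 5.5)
Your proof is correct and follows essentially the paper's route: coordinates adapted to ${\bf l}_{\hat\br}$, zero extension of $h$, the limit definition through $d_\tau$, Fubini, and the passage $\tau\to0^+$. Your a.e.-in-$s$ dominated-convergence step handles the jump across ${\bf l}_{\tilde\br}$ and the jump of the zero extension at $\partial\Omega$ more carefully than the paper, which simply invokes continuity of $h$ and then splits into subdomains.

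One correction to your closing remark: the non-emptiness hypothesis is not cosmetic. If the line contains an edge of a convex $\Omega$, then ${\bf l}_{\hat\br}=\emptyset$, yet the $d_\tau$-limit equals half the integral of the trace of $h$ along that edge, so the two sides do not both vanish. Moreover, for convex $\Omega$ (such as the paper's rectangles) it is exactly non-emptiness that forces the line to meet $\partial\Omega$ in at most two points. That is the null-set condition your third bullet merely postulates.
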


\begin{proof}
First, assume that $h(\bx)$ is continuous in ${\Omega}$. For small $\tau>0$, denote the $2\tau$-width strip centered at the line $\hat{\br}\cdot\by=0$ and a truncated rectangular strip of $R_{\tau}$  by 
\[
R_{\tau}= \left\{\bx \in \R^2 \big| -\tau \leq \hat{\bomega}\cdot\bx+\hat{b} \leq \tau\right\} \,\mbox{ and }\, R_{\tau}(a)= \left\{\bx \in R_\tau \big| -a \leq \hat{\bomega}^\perp\cdot (\bx- (\hat{\bb}-\hat{\ba})/2)\leq a\right\},
\]
respectively. 
Assume that $a$ is large enough so that $R_{\tau}\cap \Omega \subset R_{\tau}(a)$. Let $\tilde{\Omega}$ be a rectangular region containing $\Omega \cup R_{\tau}(a)$. Denote by $\tilde{h}(\bx)$ the extension of $h(\bx)$ to $\tilde{\Omega}$ by setting $\tilde{h}(\bx)=0$ in $\tilde{\Omega}\setminus \Omega$. 

Let $\xi=\hat{\bomega}\cdot\bx+\hat{b}$ and $\eta=\hat{\bomega}^\perp\cdot (\bx- (\hat{\bb}-\hat{\ba})/2)$. It follows from \cref{delta}, the definition of $\tilde{h}(\bx)$, Fubini's theorem, and the continuity of $h(\bx)$ that
\begin{eqnarray*}
\int_{\Omega}h(\bx)\delta(\hat{\br}\cdot \by)\,d\bx &=& \lim\limits_{\tau\to 0^+}\int_{\Omega}h(\bx) d_\tau(\hat{\br}\cdot \by)\,d\bx
= \lim\limits_{\tau\to 0^+} \int_{\tilde{\Omega}}\tilde{h}(\bx) d_\tau(\hat{\br}\cdot \by)\,d\bx \\[2mm]
&=& \lim\limits_{\tau\to 0^+}\dfrac{1}{2\tau}\int_{-\tau}^\tau\int_{-a}^{a}\tilde{h}(\bx(\xi, \eta))\,d\eta d\xi = \int_{-a}^{a}\tilde{h}(\bx(0, \eta))\,d\eta =\int_{{\bf l}_{\hat{\br}}} h\,ds. 
\end{eqnarray*}
This proves \cref{h-d} for the case of $h(\bx)$ being continuous. 
The discontinuous case follows by separately considering the subsets of $\Omega$ divided by ${\bf l}_{\hat{\br}}$ and summing the corresponding integrals.
\end{proof}


With \cref{diracDeltaLemma}, we may use \cref{h-d} to define the integral of a piecewise continuous function against the Dirac delta function in two dimensions as the line integral. Next lemma establishes the commutativity between the gradient and the integral involving the Heaviside step function.

\begin{lemma}\label{commutativity}
 For a given function $h(\bx)$ and a vector $\hat{\br}$ as in \cref{diracDeltaLemma}, under the assumptions of \cref{diracDeltaLemma}, we have
    \begin{align}\label{g-H}
    \nabla_{\hat{\br}} \int_{\Omega}h(\bx)H(\hat{\br}\cdot\by)\,d\bx &= \int_{\Omega}h(\bx)\delta(\hat{\br}\cdot\by)\by \,d\bx = \int_{\Omega}h(\bx) \nabla_{\hat{\br}}H(\hat{\br}\cdot\by)\,d\bx \\[2mm]  \label{hsigma} \mbox{and }\, \nabla_{\hat{\br}}\int_{\Omega}h(\bx)\sigma(\hat{\br}\cdot\by)H(\hat{\br}\cdot\by)d\bx &= \int_{\Omega}h(\bx)\sigma(\hat{\br}\cdot\by)\delta(\hat{\br}\cdot\by)\by d\bx + \int_{\Omega}h(\bx)\left[H(\hat{\br}\cdot\by)\right]^2\by d\bx.     
    \end{align}
    where $\nabla_{\hat{\br}}$ denotes the weak gradient with respect to $\hat{\br}$.
\end{lemma}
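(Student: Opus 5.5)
We prove \cref{g-H} directly, by differentiating the integral in $\hat{\br}$ via difference quotients, and then obtain \cref{hsigma} from the product rule together with \cref{g-H}.

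\textbf{Step 1: $\partial_{\hat b}$.} Let $F(\hat{\br})=\int_\Omega h(\bx)H(\hat{\br}\cdot\by)\,d\bx$ and fix a unit coordinate direction $\mathbf{e}_k$ in $\R^3$. The difference quotient is
\[
\frac{F(\hat{\br}+s\mathbf{e}_k)-F(\hat{\br})}{s}=\int_\Omega h(\bx)\,\frac{H(\hat{\br}\cdot\by+sy_k)-H(\hat{\br}\cdot\by)}{s}\,d\bx .
\]
For $k=0$ we have $y_0=1$, so the integrand factor is $s^{-1}\mathbf{1}_{\{-s<\hat{\br}\cdot\by\le 0\}}$, up to the null set $\{\hat{\br}\cdot\by=0\}$. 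This is a one-sided version of the kernel $d_\tau$ in \cref{delta}. Repeating the strip argument in the proof of \cref{diracDeltaLemma}, with coordinates $(\xi,\eta)$ and Fubini's theorem, the quotient equals
\[
s^{-1}\int_{-s}^{0}\int \tilde h(\bx(\xi,\eta))\,d\eta\,d\xi .
\]
Because $h$ is continuous (or piecewise continuous across the distinct line ${\bf l}_{\tilde{\br}}$, which meets ${\bf l}_{\hat{\br}}$ in at most one point), the map $\xi\mapsto\int\tilde h(\bx(\xi,\eta))\,d\eta$ is continuous at $\xi=0$. Hence the quotient converges to $\int_{{\bf l}_{\hat{\br}}}h\,ds$, which by \cref{h-d} equals $\int_\Omega h\,\delta(\hat{\br}\cdot\by)\,d\bx$.

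\textbf{Step 2: $\partial_{\hat\omega_k}$, $k=1,2$.} Here the perturbation $s x_k$ depends on $\bx$. The region where the two Heaviside values differ is a thin wedge-like strip about ${\bf l}_{\hat{\br}}$ whose local width is $|s x_k|$, with sign given by that of $x_k$. Rather than a single strip, I would use the same $(\xi,\eta)$ coordinates and write $x_k=x_k(\xi,\eta)$. The condition then reads $-s x_k(\xi,\eta)<\xi\le 0$, or the reversed inequality. Integrating in $\xi$ first for each fixed $\eta$ gives
\[
s^{-1}\int s\,x_k(0,\eta)\,\tilde h(\bx(0,\eta))\,d\eta+o(1),
\]
and the signs combine automatically. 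The limit is therefore $\int_{{\bf l}_{\hat{\br}}}h\,x_k\,ds=\int_\Omega h\,x_k\,\delta(\hat{\br}\cdot\by)\,d\bx$, where the last equality is \cref{h-d} applied to $h x_k$.

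\textbf{Conclusion for \cref{g-H}.} Collecting the components gives the first equality in \cref{g-H}. The second equality is the formal chain rule $\nabla_{\hat{\br}}H(\hat{\br}\cdot\by)=\delta(\hat{\br}\cdot\by)\by$ from \cref{derivative}. This identity holds in the weak (distributional) sense in $\hat{\br}$, and its pairing with $h$ is defined through \cref{h-d}.

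\textbf{Main obstacle.} The hard part is justifying the uniform convergence in Step 2. The strip width varies with $\eta$, and near the endpoints $\hat{\ba},\hat{\bb}$ the strip is cut by $\partial\Omega$. I would handle this by bounding the contribution near the endpoints by $O(s)$, using the boundedness of $h$ and of the domain. Away from the endpoints, I would use continuity of $\tilde h$ on each side of ${\bf l}_{\tilde{\br}}$.

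\textbf{Step 3: \cref{hsigma}.} Write $\sigma(t)H(t)=\sigma(t)$ almost everywhere, while keeping the product form. By the product rule for weak derivatives, $\nabla_{\hat{\br}}[\sigma H]=H\,\nabla_{\hat{\br}}\sigma+\sigma\,\nabla_{\hat{\br}}H=H^2\by+\sigma\delta\by$. The interchange of the gradient with the integral is justified as follows. The term $H^2\by$ comes from the Lipschitz function $\sigma(\hat{\br}\cdot\by)$, so dominated convergence applies to its difference quotients. The term $\sigma\delta\by$ is handled exactly as in Steps 1–2 with $h$ replaced by $h\sigma(\hat{\br}\cdot\by)$, which is continuous across ${\bf l}_{\hat{\br}}$. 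The corresponding integral in fact vanishes, since $\sigma=0$ on ${\bf l}_{\hat{\br}}$, but it is retained to match the stated form.
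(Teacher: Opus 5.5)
Your argument is correct, at the same level of rigor as the paper: both tacitly assume that $h$ is bounded and that $\partial\Omega$ meets the line $\hat{\br}\cdot\by=0$ in a null set. It follows a different route, though.

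The paper never forms difference quotients. It verifies the weak derivative directly:
\begin{itemize}
\item it pairs $\int_\Omega h\,H(\hat{\br}\cdot\by)\,d\bx$ with $\varphi'(\hat b)$ for $\varphi\in C_0^\infty(\R)$ and swaps the order of integration;
\item it integrates by parts in the parameter $\hat b$, so the derivative lands on $H$ and produces $\delta$;
\item it replaces $\delta$ by $d_\tau$ and uses dominated convergence to swap back.
\end{itemize}
All the geometry is thereby outsourced to \cref{diracDeltaLemma}, and only the $\hat b$ component is written out. For \cref{hsigma} the paper uses $\sigma H=\sigma$, $H^2=H$ and $\sigma|_{{\bf l}_{\hat{\br}}}=0$. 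This reduces everything to the elementary interchange $\nabla_{\hat{\br}}\int_\Omega h\sigma\,d\bx=\int_\Omega hH\by\,d\bx$, which is shorter than your product-rule split of the difference quotient.

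Your route buys a stronger statement: genuine partial derivatives at each fixed $\hat{\br}$ with ${\bf l}_{\hat{\br}}\neq\emptyset$. That is what a Gauss--Newton step evaluated at a specific $\br^{(k)}$ actually uses. You also treat the $\hat\omega_k$ directions explicitly via the variable-width wedge, which the paper only asserts is ``similar.'' The price is the extra geometric bookkeeping.

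Two small additions would tighten your proof.
\begin{itemize}
\item Your ``main obstacle'' is handled more cleanly by dominated convergence in $\eta$ than by uniform convergence. For a.e.\ $\eta$, the map $\xi\mapsto\tilde h(\bx(\xi,\eta))$ is continuous at $\xi=0$. The inner quotient is bounded by roughly $\sup|h|\,|x_k(0,\eta)|$, uniformly in $s$.
\item The wedge has area $O(|s|)$, so your quotients are uniformly bounded. Hence the integral is locally Lipschitz in $\hat{\br}$, and its weak gradient coincides with the classical one you computed. This sentence is what links your result to the lemma's ``weak gradient'' claim.
\end{itemize}
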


\begin{proof}
Because the proof of each equation in \cref{g-H} is similar, then it suffices to show the validity of \cref{g-H} for the first equation, i.e.,
\begin{equation}\label{g-H-b}
    \dfrac{\partial}{\partial \hat{b}} \int_{\Omega}h(\bx)H(\hat{\br}\cdot\by)\,d\bx = \int_{\Omega}h(\bx)\delta(\hat{\br}\cdot\by) \,d\bx,     
    \end{equation}
where $\frac{\partial}{\partial \hat{b}}$ denotes the weak derivative with respect to $\hat{b}$. To this end, for any $\varphi \in C^{\infty}_0(\R)$, it follows from Fubini's theorem, integration by parts, \cref{delta}, and the dominated convergence theorem that
    \begin{eqnarray*}
       && \int_{-\infty}^{\infty} \biggl(\int_{\Omega} h(\bx)H(\hat{\br}\cdot\by)d\bx \biggr)\varphi^\prime(\hat{b})\,d\hat{b} =\int_{\Omega}h(\bx)\biggl(\int_{-\infty}^{\infty}H(\hat{\br}\cdot\by)\varphi'(\hat{b})\,d\hat{b}\biggr)d\bx\\[2mm]
    &=&-\int_{\Omega}h(\bx)\biggl( \int_{-\infty}^{\infty} \delta(\hat{\br}\cdot\by)\varphi(\hat{b})\,d\hat{b} \biggr)d\bx
    =-\lim_{\tau \to 0} \int_{\Omega} h(\bx)\biggl(\int_{-\infty}^{\infty} d_\tau (\hat{\br}\cdot\by)\varphi(\hat{b})\,d\hat{b} \biggr)d\bx\\[2mm]
        & =& -\lim_{\tau \to 0}\int_{-\infty}^{\infty}\biggl(\int_{\Omega}h(\bx) d_\tau(\hat{\br}\cdot\by) d\bx\biggr)\varphi(\hat{b})\,d\hat{b} 
        = -\int_{-\infty}^{\infty}\biggl(\int_{\Omega}h(\bx) \delta (\hat{\br}\cdot\by) d\bx\biggr)\varphi(\hat{b})\,d\hat{b},       
    \end{eqnarray*}
which implies the first equality of \cref{g-H-b}. Together with $\nabla_{\hat{\br}}H(\hat{\br}\cdot\by)= \delta(\hat{\br}\cdot\by)\by$ yields the second equality of \cref{g-H-b}.

To show the validity of \cref{g-H}, we first notice that 
\[
\nabla_{\hat{\br}}\int_{\Omega}h(\bx)\sigma(\hat{\br}\cdot\by)\, d\bx = \int_{\Omega}h(\bx) \nabla_{\hat{\br}}\sigma(\hat{\br}\cdot\by)\, d\bx = \int_{\Omega}h(\bx) H(\hat{\br}\cdot\by)  \by\, d\bx.
\]
\cref{hsigma} is then a direct consequence of \cref{diracDeltaLemma} and the facts that
\[
\sigma( \hat{\br} \cdot\by) H(\hat{\br}\cdot\by) = \sigma(\hat{\br} \cdot\by), \quad H(\hat{\br}\cdot\by)^2 = H(\hat{\br}\cdot\by), \quad\mbox{and}\quad \sigma(\hat{\br} \cdot \by)\big|_{{\bf l}_{\hat{\br}}} = 0.
\]
This completes the proof of the lemma.
\end{proof}

\begin{definition}\label{intH_id_i}
    For a given function $h(\bx)$ and a vector $\hat{\br}$ as in \cref{diracDeltaLemma}, we define
    \begin{equation}\label{HD}
        \int_{\Omega}h(\bx)H(\hat{\br} \cdot \by)\delta(\hat{\br} \cdot \by )\,d\bx = \frac{1}{2}\int_{\Omega}h(\bx)\delta(\hat{\br} \cdot \by)\, d\bx.
    \end{equation}
\end{definition}

\section{A Structure-Guided Gauss-Newton (SgGN) Method}\label{s:BNMethod}Since  the functional $\mathcal{L}_{\tau}\big(v;{\bf f}\big)$ in \cref{discrete_minimization_functional} is a non-convex function of the NN parameters, the resulting discrete optimization problem is non-convex. Non-convex optimization problem is generally computationally intensive and complicated. This section discusses the algebraic structures of this optimization problem and how to design a SgGN method for solving the minimization problem in \cref{discrete_minimization_functional}.

\subsection{Optimality Conditions}We first derive systems of algebraic equations arising from the first order optimality conditions of the least-squares problem in \cref{discrete_minimization_functional}. 
A solution to problem \cref{discrete_minimization_functional} has the form of
\begin{equation}\label{nn_sum}
u_{n, \tau}(\bx) = \sum_{i=0}^{n} c_i \sigma_{i}(\bx) = \bSigma(\bx;\br)^T\bc,
\end{equation}
where $\bSigma(\bx;\br)= \left(\sigma_0(\bx),\ldots, \sigma_n(\bx;\br)\right)^T$ and 
\[
\sigma_{0}(\bx)=1
\quad\mbox{and}\quad 
\sigma_{i}(\bx;\br)= \sigma  \left(b_i +\bomega_i\cdot\bx \right) = \sigma  \left(\br_i\cdot\by \right) .
\]

The linear and the nonlinear parameters $\bc$ and $\br$ 
satisfy the following optimality conditions
\begin{equation}\label{OC}
    \nabla_{\bc} {\cal L}_{\tau}\big(u_{n, \tau};{\bf f}\big)  ={\bf 0}
    \quad\mbox{and}\quad    \nabla_{\br}{\cal L}_{\tau} \big(u_{n, \tau};{\bf f}\big)  ={\bf 0},
\end{equation}
respectively, where $\nabla_{\bc}$ and $\nabla_{\br}$ denote the gradients with respect to the respective $\bc$ and $\br$.

First, we compute $\nabla_{\bc} {\cal L}_{\tau}\big(u_{n, \tau};{\bf f}\big)$. By \cref{ls_tau} and \cref{nn_sum}, we have 
\begin{equation*}
    {\cal L}_{\tau}\big(u_{n, \tau};{\bf f}\big) = \dfrac{1}{2}\|(D_{{\bm \beta, \tau}}+\gamma)\bSigma^T\bc - f\|_{\Omega_I}^2 + \dfrac{1}{2} \|(\tau^{-1} + \gamma)\bSigma^T\bc -\tau^{-1}g_{\tau}- f\|_{{\Omega}_B}^2,
\end{equation*}
where $D_{{\bm \beta, \tau}}+\gamma$ is applied componentwisely. Since $\mathcal{L}_{\tau} \big(u_{n, \tau};{\bf f}\big)$ is quadratic with respect to the linear parameters $\bc$, it is then easy to check that 
\begin{equation}\label{linear system2}
   {\bf 0}=\nabla_{\bc} {\cal L}_{\tau}\big(u_{n, \tau};{\bf f}\big) = \bm{A}(\br) \bc - F(\br),
\end{equation}
where $\bm{A}(\br)$ and $F(\br)$ are the stiffness matrix and the right-hand side vector given by 
\begin{equation}\label{A-F}
\left\{\begin{array}{l}
   \bm{A}\left(\br\right)=\displaystyle\int_{\Omega_I}\left(\left[(D_{\bm{\beta}, \tau}+\gamma) \bSigma\right] \left[(D_{\bm{\beta}, \tau} +\gamma) \bSigma\right]^T\right) d\bx + \int_{\Omega_B}\left([\tau^{-1} + \gamma]^2\bSigma\bSigma^T\right) d\bx \\ [6mm] 
\mbox{and}\quad F\left(\br\right)=\displaystyle\int_{\Omega_I}\left(f \left[(D_{\bm{\beta}, \tau} +\gamma) \bSigma\right] \right)d\bx + \int_{\Omega_B} \left(\left[\tau^{-1}+\gamma\right]\left[\frac{g_\tau}{\tau}+f\right]\bSigma\right)d\bx,
\end{array}\right.
\end{equation}
respectively. For each $i = 0, 1, \dots, n$, let 
\[
\varphi_i(\bx) = \varphi_i(\bx; \br) =
\left\{\begin{array}{ll}
    D_{\bm\beta, \tau}\sigma_i(\bx) + \gamma\sigma_i(\bx), & \bx \in \Omega_I, \\[2mm] 
     \left(\tau^{-1}(\bx) +  \gamma\right)\sigma_i(\bx), & \bx \in \Omega_B.
    \end{array}\right. 
\]

\begin{lemma}\label{lem:phi_i}
Under the assumptions that the breaking lines are distinct {\em (}i.e., $\bl_{\br_i} \neq \bl_{\br_j}$ for all $i \neq j${\em )} and that their intersection with $\Omega_{I} \cup {\Omega}_B$ is non-empty {\em (}$\bl_{\br_i} \cap \left(\Omega_{I}\cup{\Omega}_B\right) \neq \emptyset$ for all $i${\em )}, the set $\{\varphi_i(\bx)\}_{i=0}^n$ is linearly independent.
\end{lemma}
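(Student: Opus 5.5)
Suppose $\sum_{i=0}^n c_i\varphi_i(\bx)=0$ for a.e.\ $\bx\in\Omega_I\cup\Omega_B$. The plan is to show that every $c_i$ vanishes by exploiting the kinks of the neuron functions. Because the equation on $\Omega_B$ is simpler, I would start there. On $\Omega_B$ the relation reads $(\tau^{-1}(\bx)+\gamma(\bx))\sum_i c_i\sigma_i(\bx)=0$. The factor $\tau^{-1}+\gamma$ is positive wherever $\gamma\ge 0$, or more generally wherever $\tau(\bx)$ is small enough that $\tau^{-1}$ dominates $\|\gamma\|_\infty$. Dividing by it gives $v:=\sum_i c_i\sigma_i\equiv 0$ on (that part of) $\Omega_B$. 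This only controls the neurons whose breaking lines meet $\Omega_B$, so $\Omega_I$ has to be handled directly.

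On $\Omega_I$ the relation is $D_{\bm\beta,\tau}v+\gamma v=0$, that is,
\[
(1+\tau\gamma(\bx))\,v(\bx)=v(\bx-\tau\bm\beta(\bx)).
\]
Here $v$ is continuous and piecewise linear, with kinks only along the distinct lines $\bl_{\br_i}$. Fix $i$ and pick a generic point $\bx_0\in\bl_{\br_i}\cap\Omega_I$ that does not lie on any other breaking line, and such that $\bx_0-\tau\bm\beta(\bx_0)$ avoids all breaking lines. Such a point exists because the lines are distinct and $\bm\beta$ is continuous, except in the degenerate case where $\bm\beta$ is parallel to the line; I would treat that case by perturbing $\bx_0$ along the line. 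Near $\bx_0$ the right-hand side $v(\bx-\tau\bm\beta(\bx))$ is a $C^1$ function of $\bx$, since it is a composition of the locally linear $v$ with the $C^1$ map $\bx\mapsto \bx-\tau\bm\beta(\bx)$. The left-hand side, $(1+\tau\gamma)v$, has a jump in its normal derivative across $\bl_{\br_i}$ equal to $(1+\tau\gamma(\bx_0))\,c_i$, because $\sigma_i$ contributes a jump of $c_i|\bomega_i|=c_i$ and $\gamma$ is continuous.

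Since $1+\tau\gamma>0$ for small $\tau$, matching the two sides forces $c_i=0$. I expect this gradient-jump comparison to be the main obstacle. Two points need care. First, I need $\gamma$ merely continuous, which means I should compare one-sided difference quotients of $v$ rather than differentiate $\gamma$; $v$ being locally linear on each side keeps this manageable. Second, if the shifted point happens to land on another kink, I would choose a different $\bx_0$ on the segment, which is possible because only finitely many lines are involved.

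For breaking lines meeting only $\Omega_B$, the identity $v\equiv 0$ near a generic point of $\bl_{\br_i}\cap\Omega_B$ directly gives $c_i=0$ by the same kink argument, since a continuous piecewise linear function that vanishes identically cannot have a gradient jump. Once all $c_i=0$ for $i\ge1$, the relation reduces to $c_0\varphi_0=0$, where $\varphi_0=\gamma$ on $\Omega_I$ and $\varphi_0=\tau^{-1}+\gamma$ on $\Omega_B$. The function $\varphi_0$ is nonzero on $\Omega_B$ (by positivity of $\tau^{-1}+\gamma$ as above), so $c_0=0$, which completes the proof.
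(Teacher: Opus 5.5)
Two steps in your $\Omega_I$ argument fail as written.

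\textbf{Choice of $\bx_0$.} You need a point $\bx_0\in\bl_{\br_i}\cap\Omega_I$ whose upwind point $\bx_0-\tau\bm\beta(\bx_0)$ avoids every breaking line, and this point need not exist.
\begin{itemize}
\item If $\bm\beta$ is constant and parallel to $\bl_{\br_i}$, then $\bx-\tau\bm\beta\in\bl_{\br_i}$ for \emph{every} $\bx\in\bl_{\br_i}$. Perturbing $\bx_0$ along the line changes nothing. The right-hand side $v(\bx-\tau\bm\beta)$ carries the same kink $c_i$, so matching gives only $\tau\gamma(\bx_0)c_i=0$. For $\gamma\equiv 0$ (the vertical-interface and piecewise-smooth tests) this gives nothing. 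In fact $D_{\bm\beta,\tau}\sigma_i=0$ and hence $\varphi_i\equiv 0$ on $\Omega_I$, so no argument on $\Omega_I$ can detect $c_i$. This is exactly the situation of the interface-aligned neurons in the vertical-interface test.
\item If $\bl_{\br_i}=\bl_{\br_j}+\tau\bm\beta$ for some $j\ne i$ (allowed, since the lines remain distinct), all of $\bl_{\br_i}$ is mapped onto $\bl_{\br_j}$. ``Finitely many lines'' then yields no good point, and matching gives $(1+\tau\gamma(\bx_0))c_i=c_j$, not $c_i=0$.
\end{itemize}

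\textbf{The jump formula.} Your jump $(1+\tau\gamma(\bx_0))c_i$ for $(1+\tau\gamma)v$ requires $\gamma$ to be differentiable across $\bl_{\br_i}$. With only $\gamma\in C(\bar\Omega)$, $\gamma$ may itself kink along $\bl_{\br_i}$ (e.g.\ contain $\kappa\,\sigma(\br_i\cdot\by)$), which adds $\tau\kappa\,v(\bx_0)$ to the jump. When $v(\bx_0)\ne 0$, the term $\tau v(\bx_0)(\gamma(\bx)-\gamma(\bx_0))$ can cancel the kink of $v$. So comparing one-sided difference quotients cannot settle the matter locally.

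The paper's proof is also loose on these points. It treats $\varphi_i$ as piecewise linear, i.e.\ effectively constant $\gamma$ and $\bm\beta$. It also invokes distinctness of ``breaking lines and curves,'' which is not a hypothesis. That does not rescue your steps, though.

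\textbf{Patching your route.} Three changes suffice:
\begin{itemize}
\item Send every line that meets $\Omega_B$ through your $\Omega_B$ step. This includes lines parallel to $\bm\beta$, which cross the inflow strip.
\item Resolve chains $\bl_{\br_{j_0}},\,\bl_{\br_{j_0}}+\tau\bm\beta,\dots$ by induction starting from the most upstream line.
\item Assume $\gamma\in C^1$.
\end{itemize}

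\textbf{A cleaner argument.} You can avoid kinks in $\Omega_I$ entirely.
\begin{enumerate}
\item You already have $v\equiv 0$ on $\Omega_B$.
\item The identity $v(\bx)=v(\bx-\tau\bm\beta(\bx))/(1+\tau\gamma(\bx))$ then propagates zero into $\Omega_I$. Backward steps have length $\tau$, far below the strip width. So every point of $\Omega_I$ reaches $\Omega_B$, not $\Omega_B^{\epsilon}$ (take $\epsilon$ smaller than the strip width minus $\tau$), in finitely many steps. This holds for constant $\bm\beta$ on the paper's domains, or more generally whenever discrete backward characteristics reach the inflow strip.
\item Backward induction gives $v\equiv 0$ on $\Omega_I\cup\Omega_B$.
\item A single kink argument applied to $v$ itself then gives $c_i=0$ for $i\ge 1$, and hence $c_0=0$.
\end{enumerate}
This uses only $1+\tau\gamma\ne 0$ and $\tau^{-1}+\gamma\ne 0$. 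It needs no regularity of $\gamma$ and no genericity of upwind points.
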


        \begin{proof}For any $\alpha_i \in \mathbb{R}$ ($i=0,1,\ldots,n$),  let $\Phi(\bx) = \sum\limits_{i = 0}^{n}\alpha_i \varphi_i(\bx)$. Additionally, for each $i = 1, \dots, n$, we define the breaking curve $\mathcal{C}_{\br_i}$ by 
         $${\cal C}_{\br_i} = \{\bx \in \Omega: b_i + \bomega_i \cdot (\bx - \tau {\bm \beta}(\bx)) = 0\}.$$  Introducing the following index sets
\begin{align*}
    I_{1, I} =& \{i \in\{1,\dots, n\}: D_{{\bm \beta}, \tau}\sigma_i \neq 0, (\bl_{\br_i} \cup \bl_{\br_i, \tau}) \cap \Omega_I \neq \emptyset \},\\[2mm]
    I_{1, B} = &\{i \in\{1,\dots, n\}: D_{{\bm \beta}, \tau}\sigma_i \neq 0, (\bl_{\br_i} \cup \bl_{\br_i, \tau}) \cap \Omega_I = \emptyset \} \cup \{0\},\\[2mm]
    \mbox{and} \,\, I_2 =& \{1,\dots, n\}\backslash (I_{1,I} \cup I_{1,B}),
\end{align*}
then, for any $\bx \in \Omega_I$ we have
\[
\Phi(\bx) = \sum_{i \in I_{1, I}}\alpha_{i}\varphi_i(\bx) + \sum_{i \in I_{1, B}}\alpha_{i}\varphi_i(\bx) + \sum_{i \in I_2}\alpha_i\varphi_{i}(\bx).
\]

First, for all $i\in I_{1, B}$, $\varphi_i(\bx)$ is a linear function in $\Omega_I$, and so is $\sum\limits_{i \in I_{1, B}}\alpha_{i}\varphi_i(\bx)$. Second, for all $i\in I_{2}$, by \cref{finite_diff}, we have $\sigma_i(\bx) = \sigma_i(\bx-\tau {\bm \beta})$ for all $\bx \in \Omega_I$. If ${\bm \beta}$ is not constant, then $I_2$ is empty. Otherwise, the $\mathcal{C}_{\br_i}$ is a line parallel to ${\bm \beta}$. The assumption that $\bl_{\br_i} \cap (\Omega_I \cup \Omega_B) \neq \emptyset$ implies that $\bl_{\br_i} \cap \Omega_S \neq \emptyset$ for both $S=I$ and $S=B$. Therefore, for $i \in I_{1, I} \cup I_2$, $\varphi_i$ is a continuous piecewise linear function in $\Omega_I$. 

If $\Phi(\bx) = 0$ for all $\bx \in \Omega$, then it is also vanished in $\Omega_I\subset \Omega$. Hence, for  all $\bx \in \Omega_I$, we have
\[ 
    \sum_{i \in I_{1, I}}\alpha_{i}\varphi_i(\bx) + \sum_{i \in I_2}\alpha_i\varphi_{i}(\bx)=0, 
\] 
which, together with the fact that the breaking lines and curves are distinct, implies $\alpha_i = 0$ for all $i  \in I_{1, I} \cup I_2$. Now, the assumption $\Phi(\bx) = 0$ for all $\bx \in \Omega$ yields
$$\sum_{i \in I_{1, B}}\alpha_{i}\varphi_i(\bx) =0 \,\,\mbox{ in }\, \Omega_B.$$
Together with the fact that $\{\bl_{\br_i}\}_{i\in I_{1, B}}$ are distinct, we have $\alpha_i=0$ for all $i\in I_{1, B}$.  This completes the proof of the lemma.
\end{proof}

\begin{lemma}\label{lem:A_spd}
Under the assumptions of \cref{lem:phi_i}, the stiffness matrix $\bm{A}(\br)$ is symmetric positive definite.
\end{lemma}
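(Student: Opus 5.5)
The plan is to recognize $\bm{A}(\br)$ as the Gram matrix of the functions $\{\varphi_i\}_{i=0}^n$ in $L^2(\Omega_I\cup\Omega_B)$, and then deduce positive definiteness from the linear independence in \cref{lem:phi_i}.

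First, compare \cref{A-F} with the definition of $\varphi_i$. On $\Omega_I$ the vector $(D_{\bm\beta,\tau}+\gamma)\bSigma$ has components $\varphi_i$. On $\Omega_B$ we have $[\tau^{-1}+\gamma]^2\bSigma\bSigma^T = \big([\tau^{-1}+\gamma]\bSigma\big)\big([\tau^{-1}+\gamma]\bSigma\big)^T$, whose entries are $\varphi_i\varphi_j$. Hence
\[
\bm{A}(\br)_{ij} = \int_{\Omega_I\cup\Omega_B}\varphi_i(\bx)\varphi_j(\bx)\,d\bx, \qquad i,j=0,\ldots,n .
\]
Symmetry is then immediate. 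Each entry is finite: $\sigma_i$ is bounded on the bounded domain, $\gamma$ is continuous on $\bar\Omega$, $\tau$ is a fixed positive constant on $\Omega_I$, and $\tau(\bx)\ge\epsilon$ on $\Omega_B$, so $\tau^{-1}\le 1/\epsilon$ there. Thus every $\varphi_i\in L^2(\Omega_I\cup\Omega_B)$.

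Next, for any $\bm\alpha=(\alpha_0,\ldots,\alpha_n)^T\in\R^{n+1}$ we have
\[
\bm\alpha^T\bm{A}(\br)\bm\alpha=\int_{\Omega_I\cup\Omega_B}\Big(\sum_{i=0}^n\alpha_i\varphi_i\Big)^2 d\bx=\|\Phi\|^2_{0,\Omega_I\cup\Omega_B}\ge 0 ,
\]
where $\Phi=\sum_i\alpha_i\varphi_i$ as in the proof of \cref{lem:phi_i}. So $\bm{A}(\br)$ is positive semidefinite. If $\bm\alpha^T\bm{A}(\br)\bm\alpha=0$, then $\Phi=0$ almost everywhere on $\Omega_I\cup\Omega_B$. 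Under the assumptions carried over from \cref{lem:phi_i}, that lemma gives $\bm\alpha=\bf 0$, so $\bm{A}(\br)$ is positive definite.

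The main point needing care is the passage from ``$\Phi=0$ a.e.'' to the hypothesis ``$\Phi\equiv0$'' used in \cref{lem:phi_i}. On $\Omega_I$ the function $\Phi$ is piecewise continuous, with discontinuities only along finitely many breaking lines $\bl_{\br_i}$ and curves ${\cal C}_{\br_i}$. On $\Omega_B$ it is $(\tau^{-1}+\gamma)$ times a continuous function. Vanishing a.e. therefore forces vanishing on each open piece, and this is exactly what the argument of \cref{lem:phi_i} uses, since it reasons piecewise. I would state this reduction explicitly.

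A second point deserves a remark. On $\Omega_B$, independence of $\{\sigma_i\}_{i\in I_{1,B}}$ after multiplication by $\tau^{-1}+\gamma$ relies on that factor being nonzero. I would rely on \cref{lem:phi_i} as stated rather than reprove it.
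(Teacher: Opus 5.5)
Your proposal is correct and follows the paper's own route. The paper's proof is the single remark that the lemma is a direct consequence of \cref{A-F} and \cref{lem:phi_i}, which is exactly your observation: $\bm{A}(\br)$ is the Gram matrix of $\{\varphi_i\}_{i=0}^n$ on $\Omega_I\cup\Omega_B$, so positive definiteness follows from linear independence. Your extra points, finiteness of the entries via $\tau(\bx)\ge\epsilon$ on $\Omega_B$ and the passage from $\Phi=0$ almost everywhere to $\Phi\equiv 0$ by piecewise continuity, just make explicit details the paper leaves implicit.
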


\begin{proof}
This is a direct consequence of \cref{A-F} and \cref{lem:phi_i}.
\end{proof}

Next, we calculate 
\[
\nabla_{\br} \mathcal{L}_{\tau}\big(u_{n, \tau};{\bf f}\big)=\nabla_{\br} \mathcal{L}^I_{\tau}\big(u_{n, \tau};f\big)+\nabla_{\br} \mathcal{L}^B_{\tau}\big(u_{n, \tau};{\bf f}\big).
\]
To this end, denote by
\begin{equation}\label{R1}
R_S(\bx) = R_S(\bx;\bc,\br) =
\left\{\begin{array}{ll}
    \left[(D_{\bm\beta, \tau} + \gamma) u_{n, \tau} - f\right](\bx), & S=I, \\[2mm] 
    \left[ \left(\tau^{-1} +  \gamma\right)  u_{n, \tau}  -\tau^{-1}g_{\tau} -f\right](\bx), & S=B
    \end{array}\right.
\end{equation}
the residuals in $\Omega_I$ and ${\Omega}_B$, respectively. 
Let $\delta_{ij}$ denote the Kronecker delta symbol and let $H_i(\bx) = H_i(\bx; \br) = H(b_i + \bomega_i \cdot \bx)$. Since
\[
\nabla_{\br_i}\sigma_j(\bx) = \delta_{ij}H_j(\bx)\,\by \quad \mbox{and} \quad  \nabla_{\br_i}D_{{\bm \beta}, \tau }\sigma_j(\bx) = \delta_{ij}D_{\bm\beta, \tau}(H_j(\bx)\,\by),
\]
then \cref{nn_sum} gives
\begin{equation}\label{R2}
\nabla_{\br_i}R_S(\bx;\bc,\br)=
\left\{\begin{array}{ll}
 c_i(D_{\bm\beta, \tau} +  \gamma) (H_i(\bx)\,\by), & S=I, \\[2mm] 
 c_i \left(\tau^{-1} +\gamma\right)H_i(\bx)\, \by, & S=B .
    \end{array}\right.
\end{equation}
Let $\hat{\bc} = (c_1, \dots, c_n)^T$, and denote by $D(\hat{\bc}) = \text{diag}(c_1, \dots, c_n)$ the diagonal matrix with $\hat{\bc}$ on the diagonal. Set
\[
{\bH}(\bx;\br)=\big(H_1\left(\bx;\br\right), \ldots, H_n\left(\bx;\br\right)\big)^T, 
\]
then \cref{R2} implies 
\begin{equation}\label{R3}
\nabla_{\br}R_S(\bx;\bc,\br)=
 \left(D(\hat{\bc})\otimes I_{3}\right)\mathbf{G}_S(\bx;\br),
\end{equation}
for $S=I$ and $B$, where $\otimes$ is the Kronecker product and 
\begin{equation}\label{G}
\mathbf{G}_I(\bx;\br)=\left(D_{\bm\beta, \tau} + \gamma\right)\bigl(\bH(\bx;\br) \otimes \by\bigr) \quad\mbox{and}\quad \mathbf{G}_B(\bx;\br) = \left(\tau^{-1} + \gamma\right)\bH(\bx;\br) \otimes \by. 
\end{equation}

\begin{lemma}\label{l:Grad}The  gradient of $\mathcal{L}_{\tau}\big(u_{n, \tau};{\bf f}\big)$ with respect to the nonlinear parameters $\br$ is given by
\begin{equation}\label{GradL}
   \nabla_\br \mathcal{L}_{\tau}\big(u_{n, \tau};{\bf f}\big) 
= \left(D(\hat{\bc})\otimes I_{3}\right) \sum_{S=I, B} \int_{\Omega_S} R_S(\bx;\bc,\br) \mathbf{G}_S(\bx;\br) d\bx.
\end{equation}
\end{lemma}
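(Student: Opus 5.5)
The proof is a direct chain-rule computation. We split $\mathcal{L}_\tau = \mathcal{L}^I_\tau + \mathcal{L}^B_\tau$ and handle each piece separately. Each piece has the form
\[
\mathcal{L}^S_\tau = \frac12\int_{\Omega_S} R_S(\bx;\bc,\br)^2\,d\bx ,
\]
with the residual $R_S$ defined in \cref{R1}. Formally differentiating under the integral with respect to $\br$ gives
\[
\nabla_\br \mathcal{L}^S_\tau = \int_{\Omega_S} R_S\,\nabla_\br R_S\,d\bx .
\]
Substituting \cref{R3} and pulling the constant matrix $D(\hat{\bc})\otimes I_3$ outside the integral then yields the $S$-term of \cref{GradL}. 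Summing over $S=I,B$ completes the formula. The substance of the proof is justifying the interchange of the weak gradient $\nabla_{\br}$ with integration in $\bx$.

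\textbf{The boundary part.} On $\Omega_B$, the residual depends on $\br$ only through the terms $c_i(\tau^{-1}+\gamma)\sigma(\br_i\cdot\by)$. Expanding $R_B^2$ produces two kinds of terms: terms linear in $\sigma(\br_i\cdot\by)$, with $\br$-independent coefficients, and products $\sigma(\br_i\cdot\by)\sigma(\br_j\cdot\by)$.

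For the linear terms, the identity
\[
\nabla_{\br}\int h\,\sigma(\hat\br\cdot\by)\,d\bx=\int h\,H(\hat\br\cdot\by)\,\by\,d\bx
\]
from the proof of \cref{commutativity} applies directly. For the products with $i\ne j$, differentiating in $\br_i$ treats $h = c_j\sigma_j(\tau^{-1}+\gamma)^2$ as a fixed continuous function, so the same identity applies. For the diagonal term $\sigma_i^2$, we write $\sigma_i^2=\sigma_i\,\sigma_i H_i$ and use \cref{hsigma}; the delta term vanishes there because $\sigma$ is zero on $\bl_{\br_i}$. Collecting all the terms reproduces $\int_{\Omega_B} R_B\,c_i(\tau^{-1}+\gamma)H_i\by\,d\bx$, which matches \cref{R2}.

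\textbf{The interior part.} On $\Omega_I$, the residual involves
\[
D_{\bm\beta,\tau}\sigma_i=\tau^{-1}\bigl(\sigma(\br_i\cdot\by)-\sigma(\br_i\cdot\by(\bx-\tau\bm\beta))\bigr).
\]
The shifted term is again a ReLU composed with an affine function of $\br_i$. Its breaking curve is $\mathcal{C}_{\br_i}$, and this curve is not straight when $\bm\beta$ varies. This is the main obstacle: \cref{diracDeltaLemma} and \cref{commutativity} are stated for straight breaking lines.

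To handle it, we prove the one-variable identity for each coordinate of $\br_i$ in the weak sense, as in the proof of \cref{commutativity}. Pair the integral with $\varphi'(\hat b)$, apply Fubini, and integrate by parts in $\hat b$. This step uses only that $t\mapsto\sigma(t+\text{const})$ has weak derivative $H$, and it does not need the geometry of the breaking set. It gives
\[
\nabla_{\br_i}\sigma(\br_i\cdot\by(\bx-\tau\bm\beta))=H(\cdot)\,\by(\bx-\tau\bm\beta)
\]
under the integral, and hence $\nabla_{\br_i}D_{\bm\beta,\tau}\sigma_i=D_{\bm\beta,\tau}(H_i\by)$.

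For the products of two $\sigma$-type terms, we use the fact that the integrand is Lipschitz in $\br$, locally uniformly in $\bx$. The product rule for Lipschitz functions then holds almost everywhere, and dominated convergence justifies differentiating under the integral. Terms in which delta functions would multiply a $\sigma$ vanish, as they did on $\Omega_B$. Combining everything gives $\int_{\Omega_I}R_I\,c_i(D_{\bm\beta,\tau}+\gamma)(H_i\by)\,d\bx$.

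Stacking these $3$-vectors over $i$ and factoring out $c_i$ as $D(\hat{\bc})\otimes I_3$, we arrive at \cref{GradL}.
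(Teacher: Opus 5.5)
Your proposal is correct and follows the paper's proof. The paper likewise writes each piece as $\frac12\int_{\Omega_S}R_S^2\,d\bx$, interchanges $\nabla_{\br}$ with the integral to get $\int_{\Omega_S}R_S\nabla_{\br}R_S\,d\bx$, substitutes \cref{R3}, and factors out $D(\hat{\bc})\otimes I_3$; the only difference is that the paper simply cites ``the commutativity of the gradient and integration'' at the interchange step, while you justify it term by term.

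Your Lipschitz/dominated-convergence argument alone already covers every term, including variable $\bm\beta$ and the diagonal $\sigma_i^2$ term (where $t\mapsto\sigma(t)^2$ is $C^1$). That is cleaner than invoking \cref{hsigma} with an $h$ that itself depends on $\br_i$, a case that identity does not literally cover.
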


\begin{proof}
It follows from \cref{R1} and the commutativity of the gradient and integration that 
\[
  \nabla_{\br}\mathcal{L}_{\tau}\big(u_{n, \tau};{\bf f}\big) 
  = \dfrac12 \nabla_{\br} \sum_{S=I, B} \int_{\Omega_S} \left[R_S(\bx; \bc, \br)\right]^2 d\bx = \sum_{S=I, B} \int_{\Omega_S} R_S(\bx; \bc, \br)\nabla_{\br}R_S(\bx; \bc, \br) d\bx,\]
which, together with \cref{R3}, implies \cref{GradL}.
\end{proof}

\subsection{Newton and Gauss-Newton Matrices}
For given linear parameters $\bc$, the second equation in \cref{OC} is the system of nonlinear algebraic equations on $\br$ to be solved by the Gauss-Newton method. In this section, we derive the Gauss-Newton and Hessian matrices of the functional ${\cal L}_{\tau}\big(u_{n, \tau};{\bf f}\big)$ with respect to the nonlinear parameters $\br$. 

To this end, let
\[
\Lambda(\bx;\br)=\text{diag}(\delta_1(\bx;\br_1),\ldots, \delta_n(\bx;\br_n)) \quad\mbox{with }\, \delta_i(\bx) = \delta_i(\bx;\br_i) = \delta (\br_i \cdot\by).
\]
For $i,j=1,\ldots,n$, we have $\nabla_{\br_i} \left(H_j(\bx)\,\by\right)^T = \delta_{ij}\delta_j(\bx)\,\by\by^T$. Hence,
\[
    \nabla_{\br} \bigl(\bH(\bx;\br) \otimes \by\bigr)^T =\Lambda(\bx;\br) \otimes  \by\by^T. 
\]
which, together with \cref{G}, implies,
\begin{equation}\label{nabla_rG}
\nabla_{\br}\mathbf{G}_S(\bx;\br)^T =\left\{\begin{array}{ll}
 (D_{\bm\beta, \tau}+\gamma)\bigl(\Lambda(\bx;\br) \otimes \by\by^T\bigr),    & S=I, \\[2mm]
   (\tau^{-1} + \gamma) (\Lambda(\bx; \br) \otimes \by \by^T),   & S=B.
\end{array}\right. 
\end{equation}

\begin{lemma}
Assume that $f$, $\gamma$, and $g$ are piecewise continuous. Then the Hessian matrix of $\mathcal{L}_{\tau}\big(u_{n, \tau};\bff\big)$ with respect to the nonlinear parameters $\br$ is given by
\begin{equation}\label{hessian}
    \nabla^2_\br \mathcal{L}_{\tau}\big(u_{n, \tau};\bff\big) =  \left(D(\hat{\bc})\otimes I_{3}\right){\cal H}(\br)\left(D(\hat{\bc})\otimes I_{3}\right) + \,\hat{{\cal H}}(\bc, \br)\left(D(\hat{\bc})\otimes I_{3}\right),
\end{equation}
where ${\mathcal{H}}(\br)= \sum\limits_{S=I, B} {\mathcal{H}}_S(\br)$ and $\hat{{\cal H}}(\bc, \br) = \sum\limits_{S=I, B} \hat{{\cal H}}_S(\bc, \br)$, and ${\mathcal{H}}_S(\br)$ and $\hat{{\cal H}}_S(\bc, \br)$ are given by
\begin{equation}\label{H}
{\mathcal{H}}_S(\br)= \int_{\Omega_S}  \mathbf{G}_S(\bx;\br) \mathbf{G}_S(\bx;\br)^Td\bx
\quad\mbox{and}\quad  \hat{{\cal H}}_S(\bc, \br)= \displaystyle\int_{\Omega_S}R_S(\bx; \bc, \br)\nabla_\br {\bf G}_S(\bx; \br)^T \,d\bx.
\end{equation}
\end{lemma}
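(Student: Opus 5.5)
We start from the gradient formula of \cref{l:Grad}, written as
\[
\nabla_\br \mathcal{L}_{\tau} = \left(D(\hat{\bc})\otimes I_{3}\right) \sum_{S=I, B} \int_{\Omega_S} R_S(\bx;\bc,\br)\, \mathbf{G}_S(\bx;\br)\, d\bx ,
\]
and differentiate it once more with respect to $\br$. The linear parameters $\bc$ are held fixed, so the prefactor $D(\hat{\bc})\otimes I_3$ is constant in $\br$. The Hessian is the transpose-Jacobian of this vector, so we apply $\nabla_\br(\cdot)^T$ inside the sum over $S$. To pass the derivative under the integral, we use \cref{commutativity}. Each component of $R_S\mathbf{G}_S$ is a product of a piecewise continuous function (built from $f$, $\gamma$, $g_\tau$, $\tau^{-1}$, and the $\sigma_j$) with $H_i(\bx)\by$ or its shifted version $H_i(\bx-\tau\bm\beta)\by$. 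These are exactly the integrands covered by \cref{g-H} and \cref{hsigma}. For the shifted terms, we apply the lemma after the change of variables $\bx\mapsto\bx-\tau\bm\beta(\bx)$, or by viewing $H(\br_i\cdot\by(\bx-\tau\bm\beta))$ as a Heaviside of an affine function of $\br_i$. In both cases the $\br_i$-derivative is a delta times the corresponding $\by$. The piecewise continuity assumption on $f,\gamma,g$ is precisely what \cref{diracDeltaLemma} needs for these delta integrals to be well defined.

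Once the derivative is inside, the product rule gives
\[
\nabla_\br\bigl(R_S\,\mathbf{G}_S^T\bigr) = \bigl(\nabla_\br R_S\bigr)\,\mathbf{G}_S^T + R_S\,\nabla_\br \mathbf{G}_S^T .
\]
For the first term we use \cref{R3}: $\nabla_\br R_S = (D(\hat{\bc})\otimes I_3)\mathbf{G}_S$. Integrating, and multiplying by the outer prefactor on the left, gives $(D(\hat{\bc})\otimes I_3)\mathcal{H}_S(\br)(D(\hat{\bc})\otimes I_3)$ after transposing, since the prefactor is symmetric. For the second term we use \cref{nabla_rG}, which identifies $\nabla_\br\mathbf{G}_S^T$ as the delta-type matrix. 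This yields $\hat{\mathcal{H}}_S(\bc,\br)$, multiplied by the single factor $D(\hat{\bc})\otimes I_3$. Summing over $S=I,B$ gives \cref{hessian}. Here we must be careful about on which side the factor sits. $\nabla_\br\mathbf{G}_S^T$ is block diagonal, since it involves $\Lambda\otimes\by\by^T$, so it commutes with the block-diagonal $D(\hat{\bc})\otimes I_3$. The placement on the right is therefore harmless.

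The main obstacle is justifying the differentiation of terms where $H_i$ multiplies quantities that themselves depend on $\br_i$, namely $R_S$ containing $\sigma_i$. This is the situation of \cref{hsigma}: the term $R_S\,\delta_i\,\by\by^T$ includes $\sigma_i\delta_i$, which vanishes on ${\bf l}_{\br_i}$. We will also encounter products $H_iH_i$, and we handle them via $H^2=H$ as in \cref{commutativity}. In the $D_{\bm\beta,\tau}$ terms we may encounter $H_i\delta_i$, which we interpret through \cref{intH_id_i}. We would verify these componentwise for a single neuron $i$ and a single entry of $\by$, then assemble the blocks with the Kronecker notation.
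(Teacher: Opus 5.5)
Your proposal is correct and follows essentially the same route as the paper. Both arguments differentiate the gradient formula of \cref{l:Grad} with $\bc$ held fixed, pass the derivative under the integral via \cref{commutativity}, apply the product rule, and identify the two resulting terms through \cref{R3} and the definition of $\hat{\mathcal{H}}_S$. The only cosmetic difference is that the paper differentiates the transposed gradient $\sum_S\int_{\Omega_S}R_S\mathbf{G}_S^T\,d\bx\,(D(\hat{\bc})\otimes I_3)$, so the factor lands on the right automatically; your ordering instead needs the observation that the block-diagonal $\hat{\mathcal{H}}_S$ commutes with $D(\hat{\bc})\otimes I_3$, which you supply correctly.
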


\begin{proof}
Since $R_S(\bx; \bc, \br)$ and $\mathbf{G}_S(\bx;\br)$ defined in \cref{R1} and \cref{G} are functions of the respective $\left\{ \sigma_i(\bx)\right\}_{i=0}^n$ and  $\left\{ H_i(\bx)\right\}_{i=0}^n$, it follows from \cref{commutativity}, the product rule, \cref{R3}, and \cref{H} that
\begin{eqnarray*}
&& \nabla_\br \int_{\Omega_S} R_S(\bx;\bc,\br) \mathbf{G}_S(\bx;\br)^T d\bx  \\[2mm]
&=& \int_{\Omega_S} \left[\nabla_\br R_S(\bx;\bc,\br)\right] \mathbf{G}_S(\bx;\br)^T d\bx + \int_{\Omega_S} R_S(\bx;\bc,\br) \nabla_\br\mathbf{G}_S(\bx;\br)^T d\bx \\[2mm]
&=&  \left(D(\hat{\bc})\otimes I_{3}\right) {\mathcal{H}}_S(\br) + \hat{{\cal H}}_S(\bc, \br)
\end{eqnarray*}
which, together with \cref{GradL}, implies 
\[
\nabla^2_\br \mathcal{L}_{\tau}\big(u_{n, \tau};\bff\big) 
    =  \sum_{S=I, B} \left( \left(D(\hat{\bc})\otimes I_{3}\right) {\mathcal{H}}_S(\br) + \hat{{\cal H}}_S(\bc, \br) \right) \, \left(D(\hat{\bc})\otimes I_{3}\right).
\]
This completes the proof of the lemma.
\end{proof}

The first term of \cref{hessian} is the so-called Gauss-Newton matrix denoted by
\begin{equation}\label{GNMatrix}
    {\cal G}(\bc, \br) = \left(D(\hat{\bc})\otimes I_{3}\right){\cal H}({\br})\left(D(\hat{\bc})\otimes I_{3}\right),
\end{equation}
and ${\cal H}({\br})$ is referred to as the layer Gauss-Newton matrix. For $i = 1, \ldots, n$, let $$\psi_i(\bx) = \psi_i(\bx; \br) =
\left\{\begin{array}{ll}
    D_{\bm\beta, \tau}H_i(\bx) + \gamma H_i(\bx), & \bx \in \Omega_I, \\[2mm] 
     \left(\tau^{-1}(\bx) +  \gamma\right)H_i(\bx), & \bx \in \Omega_B.
    \end{array}\right.$$

\begin{lemma}\label{lem:phi2_i}
Under the assumptions of \cref{lem:phi_i}, the set  $\{\psi_i(\bx), x_1\psi_i(\bx), x_2\psi_i(\bx)\}_{i=1}^n$ is linearly independent.
\end{lemma}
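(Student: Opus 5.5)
We will follow the strategy of the proof of \cref{lem:phi_i}: assume a vanishing linear combination, first restrict it to $\Omega_I$ to kill the coefficients of neurons whose breaking lines (or shifted breaking curves) meet $\Omega_I$, and then restrict to $\Omega_B$ to handle the remaining ones. Concretely, let $\alpha_i,\beta_i,\gamma_i\in\R$ and set
\[
\Psi(\bx)=\sum_{i=1}^n \left(\alpha_i+\beta_i x_1+\gamma_i x_2\right)\psi_i(\bx)=\sum_{i=1}^n p_i(\bx)\,\psi_i(\bx),
\]
where each $p_i$ is an affine polynomial. Assume $\Psi\equiv 0$ on $\Omega_I\cup\Omega_B$. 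The goal is to show $p_i\equiv 0$ for every $i$, which gives $\alpha_i=\beta_i=\gamma_i=0$.

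\textbf{Step 1 ($\Omega_B$, where there are no shifts).} On $\Omega_B$ we have $\psi_i=(\tau^{-1}+\gamma)H_i$, and $\tau^{-1}+\gamma$ is a common factor; we assume it is nonzero, which holds since $\tau^{-1}\ge$ is large. Hence $\sum_i p_i H_i=0$ on $\Omega_B$. The function $\sum_i p_iH_i$ is piecewise affine and can jump only across the lines $\bl_{\br_i}$, where the jump equals $p_i$ restricted to $\bl_{\br_i}$. Because the breaking lines are distinct, on a subsegment of $\bl_{\br_i}\cap\Omega_B$ that avoids all other lines the jump of the total must vanish. This forces $p_i=0$ on a segment of $\bl_{\br_i}$, so $p_i$ is a multiple of $\br_i\cdot\by$, i.e.\ $p_i=\lambda_i(\br_i\cdot\by)$. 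Note that $p_iH_i=\lambda_i\sigma_i$. The relation then reduces to $\sum_i\lambda_i\sigma_i=0$ on $\Omega_B$. Since $\sigma_i$ has a kink along $\bl_{\br_i}$ and the lines are distinct, comparing gradient jumps gives $\lambda_i=0$ for every $i$ whose line meets $\Omega_B$.

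\textbf{Step 2 ($\Omega_I$, for neurons whose line misses $\Omega_B$).} On $\Omega_I$ we have
\[
\psi_i=\tau^{-1}\bigl(H_i(\bx)-H_i(\bx-\tau\bm\beta)\bigr)+\gamma H_i.
\]
The term $\tau^{-1}H_i(\bx)+\gamma H_i$ jumps across $\bl_{\br_i}$ with magnitude $(\tau^{-1}+\gamma)p_i$. The shifted term jumps across the curve $\mathcal{C}_{\br_i}$. Multiplying by $p_i$ makes the jump across $\bl_{\br_i}$ equal to $(\tau^{-1}+\gamma)p_i$. Using distinctness of the lines and curves, exactly as in \cref{lem:phi_i}, this again forces $p_i$ to vanish on a segment of the line, hence $p_i=\lambda_i(\br_i\cdot\by)$. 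Substituting back leaves
\[
\sum_i\lambda_i\left[(D_{\bm\beta,\tau}+\gamma)\sigma_i+\tau^{-1}\sigma_i(\bx-\tau\bm\beta)-\tau^{-1}\,p_i(\bx)H_i(\bx-\tau\bm\beta)\right]=0 .
\]
Jumps across $\mathcal{C}_{\br_i}$ now involve $\lambda_i(\br_i\cdot\by)$ evaluated there, which is $\lambda_i\,\tau\,\bomega_i\cdot\bm\beta\neq 0$ when the neuron is not in the degenerate class $I_2$. This yields $\lambda_i=0$. For the degenerate case of constant $\bm\beta$ parallel to the line, $\psi_i=\gamma H_i$ on $\Omega_I$, and the argument of Step 1 applies verbatim.

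\textbf{Expected obstacle.} The main difficulty is the jump bookkeeping of Step 2. The discontinuity sets $\bl_{\br_i}$ and $\mathcal{C}_{\br_i}$ (the latter curved when $\bm\beta$ is variable) must be genuinely distinct, so that one can isolate a piece of each set on which only a single term jumps. To secure this we would invoke the same distinctness assumption on breaking lines and curves that was used in \cref{lem:phi_i}. We would also use the nonemptiness hypothesis $\bl_{\br_i}\cap(\Omega_I\cup\Omega_B)\neq\emptyset$ to guarantee that every neuron is caught either in Step 1 or in Step 2.
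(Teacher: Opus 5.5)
Your proposal is correct and is essentially the paper's argument. The paper only notes that $\{\psi_i,x_1\psi_i,x_2\psi_i\}$ is independent for each fixed $i$ and defers to the subdomain-by-subdomain proof of \cref{lem:phi_i}, relying on the same implicit assumptions you use: distinct lines $\bl_{\br_i}$ and curves $\mathcal{C}_{\br_i}$, and $\tau^{-1}+\gamma\neq 0$. Your treatment of $\Omega_B$ before $\Omega_I$, and your observation that a jump across $\bl_{\br_i}$ only yields $p_i=\lambda_i(\br_i\cdot\by)$ so that a second kink or $\mathcal{C}_{\br_i}$-jump step is needed, are a more careful rendering of that sketch.

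One step needs repair, the degenerate case in Step 2. If $\gamma=0$, then $\psi_i=\gamma H_i$ vanishes on $\Omega_I$, so Step 1 does not apply verbatim. Instead use the observation from the proof of \cref{lem:phi_i}: a line parallel to the constant $\bm\beta$ issues from $\Gamma_-$ and hence meets $\Omega_B$, so such neurons were already eliminated in your Step 1.
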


\begin{proof}
For $i = 1, \ldots, n$, the linear independence of the set $\{1, x_1, x_2\}$ implies that the set $\{\psi_i(\bx), x_1\psi_i(\bx), x_2\psi_i(\bx)\}$ is linearly independent. Under the given assumptions, the linear independence of the collection $\{\psi_i(\bx), x_1\psi_i(\bx), x_2\psi_i(\bx)\}_{i=1}^n$ may be established in a similar fashion as the proof of \cref{lem:phi_i}.
\end{proof}

\begin{theorem}\label{GNspd}
For all $i, j = 1, \dots, n$, assume that $\bl_{\br_i} \cap (\Omega_{I} \cup {\Omega}_B) \neq \emptyset$ and $\bl_{\br_i} \neq \bl_{\br_j}$ if $i\neq j$. Then the layer Gauss-Newton matrix ${\cal H}(\br)$ is symmetric positive definite. Consequently, the Gauss-Newton matrix ${\cal G}(\bc, \br)$ is symmetric positive definite if and only if $c_i \neq 0$ for all $i = 1, \dots, n$.
\end{theorem}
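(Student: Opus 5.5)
The plan is to read the layer Gauss-Newton matrix as a Gram matrix and reduce positive definiteness to \cref{lem:phi2_i}. Order the $3n$ components of $\mathbf{G}_S(\bx;\br)$ neuron by neuron. By \cref{G} and linearity of $D_{\bm\beta,\tau}+\gamma$, the $i$-th block of $\mathbf{G}_I$ is $(D_{\bm\beta,\tau}+\gamma)(H_i\by)$, and the $i$-th block of $\mathbf{G}_B$ is $(\tau^{-1}+\gamma)H_i\by$.

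Define the piecewise function $\mathbf{G}$ on $\Omega_I\cup\Omega_B$ by $\mathbf{G}=\mathbf{G}_S$ on $\Omega_S$. Then ${\cal H}(\br)=\int_{\Omega_I\cup\Omega_B}\mathbf{G}\mathbf{G}^T\,d\bx$, which is clearly symmetric. For any $\bm\xi=(\bm\xi_1,\ldots,\bm\xi_n)\in\R^{3n}$ with $\bm\xi_i=(\xi_{i0},\xi_{i1},\xi_{i2})^T$, we have
\[
\bm\xi^T{\cal H}(\br)\bm\xi=\int_{\Omega_I\cup\Omega_B}\Big(\sum_{i=1}^n \bm\xi_i^T\mathbf{G}^{(i)}(\bx)\Big)^2 d\bx\ge 0 .
\]
Hence ${\cal H}(\br)$ is positive semidefinite, and it is definite exactly when the $3n$ component functions $\{(\mathbf{G})_{i,k}\}$ are linearly independent in $L^2(\Omega_I\cup\Omega_B)$.

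The key step is to identify these component functions with the family in \cref{lem:phi2_i}. On $\Omega_B$ this is immediate: the components are $\psi_i$, $x_1\psi_i$, $x_2\psi_i$.

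On $\Omega_I$ there is a subtlety, since
\[
D_{\bm\beta,\tau}(x_kH_i)=x_kD_{\bm\beta,\tau}H_i+\beta_k(\bx)H_i(\bx-\tau\bm\beta).
\]
So $(D_{\bm\beta,\tau}+\gamma)(x_kH_i)$ is not literally $x_k\psi_i$. I would handle this in one of two ways. The first is to interpret \cref{lem:phi2_i} as asserting linear independence of the actual components $\{(D_{\bm\beta,\tau}+\gamma)(H_i y_k)\}$, with $y_0=1$, $y_1=x_1$, $y_2=x_2$, which is what its proof sketch via \cref{lem:phi_i} really establishes. The second is to rerun that argument directly. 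Suppose $\sum_{i,k}\xi_{ik}(\mathbf{G})_{i,k}=0$ a.e. Each such function is piecewise linear (affine times step), with jumps only across $\bl_{\br_i}$ and the shifted curves ${\cal C}_{\br_i}$. Because these breaking lines and curves are distinct, the jump of the sum across $\bl_{\br_i}$ is isolated: on $\Omega_I$ it equals $\tau^{-1}(\xi_{i0}+\xi_{i1}x_1+\xi_{i2}x_2)$, and on $\Omega_B$ it equals $(\tau^{-1}+\gamma)(\xi_{i0}+\xi_{i1}x_1+\xi_{i2}x_2)$. Requiring this jump to vanish along a nondegenerate segment of $\bl_{\br_i}$, which exists by the hypothesis $\bl_{\br_i}\cap(\Omega_I\cup\Omega_B)\ne\emptyset$, forces the affine function $\bm\xi_i\cdot\by$ to vanish on that segment.

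The main obstacle is exactly here. Vanishing on a segment only gives $\bm\xi_i\parallel\br_i$, so one extra condition is needed to kill the remaining one-dimensional freedom. For this I would use the jump across the shifted curve ${\cal C}_{\br_i}$ on $\Omega_I$. There the coefficient contains $\beta_k H_i(\bx-\tau\bm\beta)$ terms, giving an independent affine combination. Alternatively, one can argue that the function $(\br_i\cdot\by)H_i$ restricted to a side of the line is $\sigma_i$, and then use the linear independence of $\{\varphi_i\}$ from \cref{lem:phi_i}. Once all $\bm\xi_i=0$ is established, ${\cal H}(\br)$ is symmetric positive definite.

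For the consequence, note that ${\cal G}=(D(\hat{\bc})\otimes I_3){\cal H}(D(\hat{\bc})\otimes I_3)$ is a congruence. So ${\cal G}$ is symmetric positive semidefinite, and for any $\bm\xi$ we have $\bm\xi^T{\cal G}\bm\xi=\bm\eta^T{\cal H}\bm\eta$ with $\bm\eta=(D(\hat{\bc})\otimes I_3)\bm\xi$. If every $c_i\ne0$, this transformation is invertible, so $\bm\eta\ne0$ whenever $\bm\xi\ne0$, and ${\cal G}$ is positive definite. Conversely, if some $c_i=0$, choose $\bm\xi$ supported on block $i$. Then $\bm\eta=0$, so $\bm\xi^T{\cal G}\bm\xi=0$ with $\bm\xi\ne0$ and ${\cal G}$ is singular.
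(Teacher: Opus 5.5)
Your frame matches the paper's. ${\cal H}(\br)$ is a Gram matrix, so definiteness amounts to linear independence of the $3n$ entries of $\mathbf{G}_S$. ${\cal G}(\bc,\br)$ is congruent to ${\cal H}(\br)$ via $D(\hat{\bc})\otimes I_3$, and your two-sided argument for the ``if and only if'' is correct and more explicit than the paper's ``consequently''.

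Where you depart from the paper is the core linear-independence step. The paper simply cites \cref{lem:phi2_i}. As you correctly observe, on $\Omega_I$ the entries are $(D_{\bm\beta,\tau}+\gamma)(x_kH_i)=x_k\psi_i+\beta_kH_i(\bx-\tau\bm\beta)$, not $x_k\psi_i$, so that lemma is not literally about the right family.

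Your second completion repairs this. Suppose the jump across $\bl_{\br_i}$ gives $p_i:=\bm\xi_i\cdot\by=\lambda_i\,\br_i\cdot\by$. The identity $(\br_i\cdot\by)H_i=\sigma_i$ holds everywhere, not only on one side. It turns the vanishing combination into $\sum_{i=1}^n\lambda_i\varphi_i=0$ on $\Omega_I\cup\Omega_B$, and then \cref{lem:phi_i} gives $\lambda_i=0$. This is an exact reduction to a lemma about the functions actually present, at the cost of a jump argument. The paper's route is shorter but rests on a mismatched lemma. So rather than reinterpreting \cref{lem:phi2_i}, commit to this direct argument.

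Two points need fixing.

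First, drop the idea of extracting an extra condition from the jump across ${\cal C}_{\br_i}$. The pieces $\beta_kH_i(\bx-\tau\bm\beta)$ combine with $x_kD_{\bm\beta,\tau}H_i$, so the $i$-th term jumps there by exactly $-\tau^{-1}p_i(\bx-\tau\bm\beta)$. For $\bx\in{\cal C}_{\br_i}$ we have $\bx-\tau\bm\beta\in\bl_{\br_i}$, so this jump vanishes automatically once $p_i\parallel\br_i\cdot\by$. That is expected, since $p_iH_i=\lambda_i\sigma_i$ is continuous. The remaining information sits in kinks, which is exactly what \cref{lem:phi_i} handles.

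Second, your jump-isolation claim (``breaking lines and curves are distinct'') fails for neurons with $\bomega_i\cdot\bm\beta=0$ and constant $\bm\beta$. Then ${\cal C}_{\br_i}=\bl_{\br_i}$, and the $i$-th term on $\Omega_I$ is just $\bigl((\xi_{i1},\xi_{i2})\cdot\bm\beta+\gamma p_i\bigr)H_i$. For $\gamma=0$, as in the first and third test problems where such neurons actually occur, its jump yields only one scalar condition, not $p_i\parallel\br_i\cdot\by$. For these neurons you must use the jump on $\Omega_B$, namely $(\tau^{-1}+\gamma)p_i$. You also need the fact, asserted in the paper's proof of \cref{lem:phi_i}, that such a line meets $\Omega_B$.

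Minor points: on $\Omega_I$ the jump across $\bl_{\br_i}$ is $(\tau^{-1}+\gamma)p_i$, not $\tau^{-1}p_i$. Also, like the paper, you tacitly need $\bl_{\br_i}\neq{\cal C}_{\br_j}$ for $j\neq i$, which the stated hypotheses do not exclude.
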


\begin{proof}
Clearly, the matrices ${\cal H}(\br)$ and ${\cal G}(\bc, \br)$ are symmetric and positive semi-definite. To prove the validity of the theorem, by \cref{G} and \cref{H}, we have 
\begin{equation}\label{H(r)}
    {\cal H}(\br) = \displaystyle\int_{\Omega_I}\left(\left[(D_{\bm{\beta}, \tau}+\gamma)( \bH \otimes \by)\right] \left[(D_{\bm{\beta}, \tau}+\gamma)( \bH \otimes \by)\right]^T\right) d\bx + \int_{\Omega_B}\left(\left[\bH \otimes \by\right] \left[\bH \otimes \by\right]^T\right) d\bx.
\end{equation}
Consequently, the stated assumptions combined with \cref{lem:phi2_i} imply positive definiteness of 
${\cal H}(\br)$, which concludes the proof.
\end{proof}

\subsection{SgGN Method}\label{sec:method}
This section introduces a SgGN method for solving the systems of nonlinear algebraic equations in \cref{OC}, i.e., the optimality conditions of the non-convex optimization problem in \cref{discrete_minimization_functional}. Since this is a separable nonlinear least-squares problem (see, e.g., 
\cite{Golub1973, separable2, liu2008separable}), it is then natural to iterate back and forth between the linear (output) and the nonlinear (hidden layer) parameters, i.e., the block nonlinear Gauss-Seidel method as the outer iteration. The blocks are corresponding to the linear and the nonlinear parameters and are solved by a direct linear solver and a modified Gauss-Newton (GN) method per each outer iteration, respectively. 

A common issue for the GN method is that the GN matrix is generally symmetric, semi-positive. The most popular approach dealing with this issue is the Levenberg-Marquardt (LM) method \cite{Levenberg, Marquardt} through a regularization. Even though the regularized GN matrix is non-singular, the true GN search direction was changed. In this case, the LM method often performs like a type of methods of gradient descent (see, e.g., \cite{SgGN}). 



By \cref{GNspd}, the GN matrix ${\cal G}(\bc, \br)$ in \cref{GNMatrix} becomes singular only when $c_i = 0$ for some $i = 1, \dots, n$. As in \cite{SgGN, CaiDokFalHer2024a, CaiDokFalHer2024b}, if $c_i = 0$, then the $i^{th}$ neuron has no contribution to the current approximation 
and hence the associated nonlinear parameter $\br_i$ is not updated. Specifically, let 
\begin{equation}\label{setI}
    {\cal I} = \bigl\{i \in \{1, \dots, n\}:|c_i^{(k)}| \geq \epsilon_c \bigr\}
\end{equation}
be the set of contributing neurons, where $\epsilon_c > 0$ is a given tolerance. Denote by 
\[
{\cal I}^c=\{1,\ldots,n\}\setminus {\cal I}\]
the complement set of ${\cal I}$ and by $\left\lvert {\cal I}^c\right\rvert$ the number of elements in ${\cal I}^c$. For a given vector ${\bf v} \in \mathbb{R}^{3n}$, denote by ${\bf v}_{{\cal I}} \in \mathbb{R}^{3(n-\left\lvert {\cal I}^c\right\rvert)}$ as the vector obtained by removing the $i^{th}$, $(i+1)^{th}$, and $(i+2)^{th}$ entries from ${\bf v}$ for every $i \in {\cal I}^c=\{1,\dots, n\} \backslash {\cal I}$. Similarly, for a matrix ${\bf B} \in \mathbb{R}^{3n\times 3n}$, let ${\bf B}_{{\cal I}} \in \mathbb{R}^{3(n-\lvert {\cal I}^c\rvert)\times 3(n-\lvert {\cal I}^c\rvert)}$ be the matrix obtained by removing the $i^{th}$, $(i+1)^{th}$, and $(i+2)^{th}$ rows and columns of ${\bf B}$ for each $i \in {\cal I}^c$.
Then the \textit{reduced} Gauss-Newton direction vector is given by
\begin{equation}\label{reduced_GN}
    {\bf d}_{R}(\bc, \br) = 
    \left(D(\hat{\bc})\otimes I_{3}\right)_{\cal I}^{-1}{\cal H}_{\cal I}^{-1}({\br})\left(D(\hat{\bc})\otimes I_{3}\right)_{\cal I}^{-1}\Bigl(\nabla_{\br}\mathcal{L}_{\tau}\big(u_{n, \tau}(\bc, \br);{\bff}\big)\Bigr)_{\cal I}.
\end{equation}

We are now ready to describe a SgGN method (see \cref{alg:dBN} for a pseudocode) for solving the minimization problem in \cref{discrete_minimization_functional}. For a prescribed tolerance $\epsilon_c >0$, let $\br^{(k)}$ be the previous iterate, then the current iterate $(\bc^{({k})}, \br^{(k+1)})$  is computed as follows:
\begin{itemize}
    \item[(i)] \textit{Compute the linear parameters} 
    \begin{equation*}
    \bc^{(k)} = {\bm A}\left(\br^{(k)}\right)^{-1} F\left(\br^{(k)}\right).
    \end{equation*}
    \item[(ii)] \textit{Compute the search direction $\mathbf{p}^{(k)}=\left({\bf p}_1^{(k)},\dots,{\bf p}_n^{(k)}\right)^T \in \mathbb{R}^{3n}$}  
    \begin{equation}\label{directionVector}
    \bigl({\bf p}^{(k)}_i \bigr)_{i \in {\cal I}} = {\bf d}_R(\bc^{(k)}, \br^{(k)})  \quad \mbox{and} \quad \bigl({\bf p}^{(k)}_i\bigr)_{i \notin {\cal I}} = {\bf 0},
    \end{equation}
    where ${\bf d}_R(\bc, \br)$ is the \textit{reduced} Gauss-Newton direction vector defined in \cref{reduced_GN}.
    \item [(iii)] \textit{Compute the non-linear parameters}
    \begin{equation*}
        \br^{(k+1)} = \br^{(k)} +\mathbf{p}^{(k)}.
    \end{equation*}  
\end{itemize}

\begin{algorithm}
    \caption{A structure-guided Gauss-Newton (SgGN) method for  \cref{discrete_minimization_functional}}\label{alg:dBN} 
    \begin{algorithmic}
    \REQUIRE{Initial network parameters $\br^{(0)}$}
    \ENSURE{Network parameters $\bc$, $\br$}
    \FOR{$k=0,1\ldots$}
    \STATE $\triangleright$ \textit{Linear parameters}
    \STATE{$\bc^{(k)}\leftarrow {\bm A}\left(\br^{(k)}\right)^{-1} F\left(\br^{(k)}\right)$} 
    \STATE $\triangleright$
    \textit{Non-linear parameters}
    \STATE{{\textit{Compute the search direction} }$\mathbf{p}^{(k)}$ \textit{as in}} \cref{directionVector}
    \STATE{$\br^{(k+1)} \leftarrow \br^{(k)} + \mathbf{p}^{(k)}$}
\ENDFOR
    \end{algorithmic}
\end{algorithm}

\subsection{
Practical Considerations}\label{imp}
As discussed in \cite{SgGN}, implementation of the SgGN method has three practical issues: (1) inversion of the stiffness and Gauss-Newton matrices, (2) initialization, and (3) numerical integration.

First, per each iteration, the SgGN method requires inversions of the stiffness matrix ${\bm A}(\br)$ given in \cref{linear system2} and the reduced layer GN matrix ${\cal H}_{\cal I}(\br)$ in \cref{reduced_GN}. Both the matrices are symmetric positive definite (see \cref{lem:A_spd} and \cref{GNspd}), but they are ill-conditioned (see \cite{SgGN, CaiDokFalHer2024a, CaiDokFalHer2024b}). Therefore, the direct inversions are done through truncated SVDs for the experiments presented in \cref{sec:exp}.


Second, the minimization problem \cref{discrete_minimization_functional} is non-convex on the nonlinear parameters $\br$. Hence, it is crucial to start with a good initialization. Since $\sigma(\br_i \cdot \by)$ is continuous piecewise linear with the associated breaking line $l(\br_i)$, a natural choice is then to initialize the nonlinear parameters so that the resulting breaking lines form a uniform partition of the domain $\Omega$ (see, e.g., \cite{LiuCai1, Cai2021linear}).


When the solution of problem \cref{pde1} is discontinuous, the interface location is often determined by the characteristic curves emanating from discontinuities of the boundary data $g$. In such cases, the initialization could be set to align the corresponding breaking lines close to this interface. However, this strategy is not feasible for nonlinear hyperbolic conservation laws, where the location of the interface is unknown \textit{a priori}. For this reason, we evaluate our method using a uniform partition as the initial choice for the nonlinear parameters, aiming to demonstrate robustness where interface locations are not pre-determined.


Third, the SgGN method requires evaluation of integrals in the definitions of ${\bm A}(\br)$, $F(\br)$, ${\cal H}(\br)$, and $\nabla_{\br}{\cal L}_{\tau}$. For simplicity, the composite midpoint rule on a reasonably fine uniform partition of the domain is used in our numerical experiments (see \cref{sec:exp}). To reduce computational cost without compromising accuracy, a better choice is to use adaptive quadrature discussed in \cite{LiCaRa23, LiuCai1}.

\section{Numerical Experiments}\label{sec:exp}

This section presents numerical results for evaluating the performance of the SgGN method. In order to compare with Adam optimizer \cite{kingma2015}, a highly favored method of gradient descent type, done in \cite{Cai2021linear}, all test problems are limited to constant advection field ${\bm \beta}$. In such case, a ReLU neural network with one hidden layer is sufficient to accurately approximate discontinuous solutions with constant jumps across straight-line interfaces (see \cite{Cai2021linear}).

In all experiments, the composite midpoint rule is used for approximating integrals on the uniform partition of the computational domain $\Omega$ into squares of the size $h = 10^{-2}$.
The partition $\Omega = \Omega_I \cup \tilde{\Omega}_B$ is defined as follows: $\tilde{\Omega}_B$ is the union of the squares in the integration mesh that share an edge with the inflow boundary $\Gamma_-$, and $\Omega_I$ is the union of the remaining squares. The parameter $\tau$ in \cref{discrete_minimization_functional} is set to $10^{-5}$, and the tolerance $\epsilon_c$ in \cref{setI} is set to $10^{-8}$. Unless otherwise specified, in all test problems, the initial nonlinear parameters $\br^{(0)}$ are chosen so that the corresponding breaking lines form a uniform partition of the domain $\Omega$.

\subsection{Piecewise Constant Solution}
This section presents two test problems whose solutions are piecewise constants with the respective vertical and diagonal straight-line discontinuity interfaces. By Theorem 3.3 of \cite{Cai2021linear}, both the problems can be approximated by a one hidden layer ReLU neural network with two neurons ($n=2$) for any given accuracy $\epsilon >0$. 
The purpose of these test problems is to study dependence on initialization and performance of the SgGN method in terms of the number of iterations. 

\subsubsection{Vertical Line Interface}\label{test_P1}The first test problem is the equation in \cref{pde1} with the domain $\Omega = (0, 2) \times (0, 1)$, the inflow boundary $\Gamma_{-} = \{(x, 0) :x \in (0, 2)\}$, the constant advection field ${\bm \beta} = (0, 1)^T$,  $\gamma = f=0 $, and the inflow boundary data $g(x) = 0$ for $x \in (0, \pi/3)$ and $g(x) = 1$ for $x\in (\pi/3, 2)$. The exact solution for this problem is the piecewise constant function given by
\begin{equation*}
                u(x, y)   = 
                \begin{cases}
                  0, &  (x, y) \in\left(0,\pi/3 \right)\times\left(0, 2\right) ,  \\[2mm]
                  1, & (x, y) \in \left(\pi/3, 1 \right)\times\left(0, 2\right).
                \end{cases}
\end{equation*}
The discontinuity interface is the vertical line $x = \pi/3$. 

\textcolor{black}{In the case of two neurons ($n = 2$), we study two initializations of the nonlinear parameters $\br^{(0)}$: (a) one breaking line is the horizontal line $y = 0$ and the other is the vertical line $x = 0$; and (b) the two breaking lines are the vertical lines $x = 1/3$ and $x = 2/3$ which are on the left of the discontinuity interface $x=\pi/3$ (see \cref{fig1aa}). Both the initializations lead to local minima after $50$ iterations at where $\|\nabla_{\br}{\cal L}_{\tau}(u_{n, \tau})\|$ is smaller than $10^{-12}$, but the relative error is large (see \cref{table1}). In the case (a), since $\frac{\partial}{\partial y}\sigma_0(x,y)=\frac{\partial }{\partial y}\sigma_1(x,y)=0$, $\frac{\partial }{\partial y}\sigma_2(x,y)=1$, and $\sigma_2(x,0)=0$, 
the corresponding linear parameter $c_2$ of $\sigma_2(x,y)$ is zero, and hence this neuron does not move during the entire iteration. In the case (b), the SgGN method cannot move one of the vertical lines across the discontinuity interface. 
}

\begin{figure}[htbp!]
        \begin{subfigure}[b]{0.45\textwidth}
        \centering
        \includegraphics[width=\textwidth]{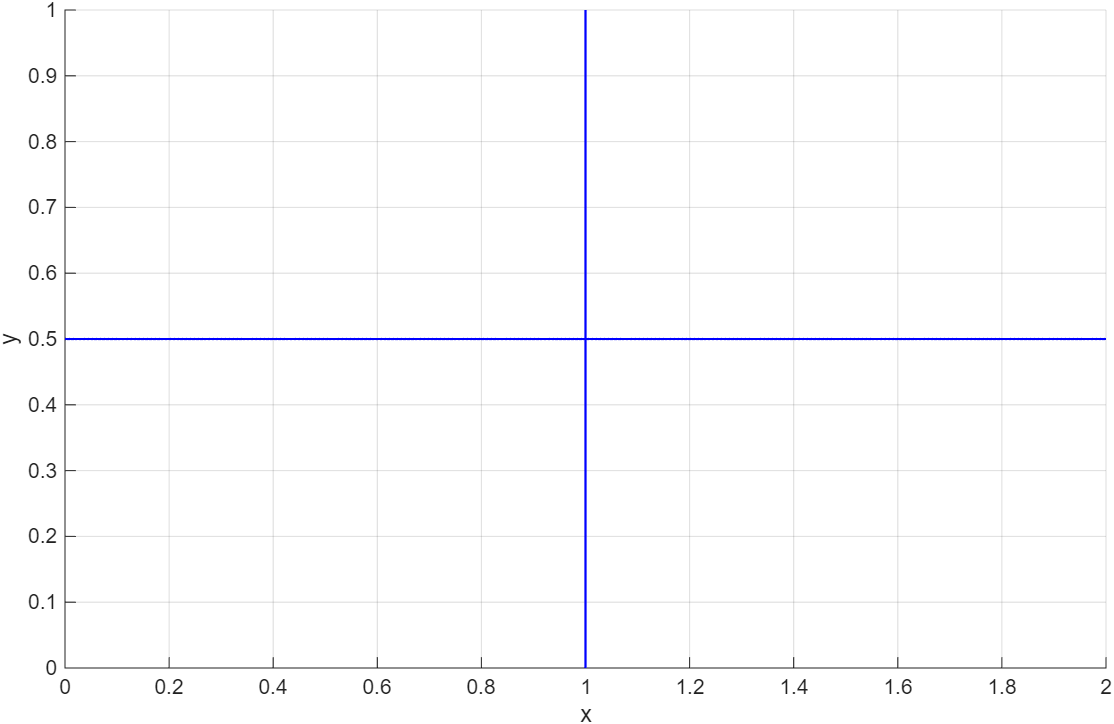}
    \end{subfigure}
    \hfill
    \begin{subfigure}[b]{0.45\textwidth}
        \centering
        \includegraphics[width=\textwidth]{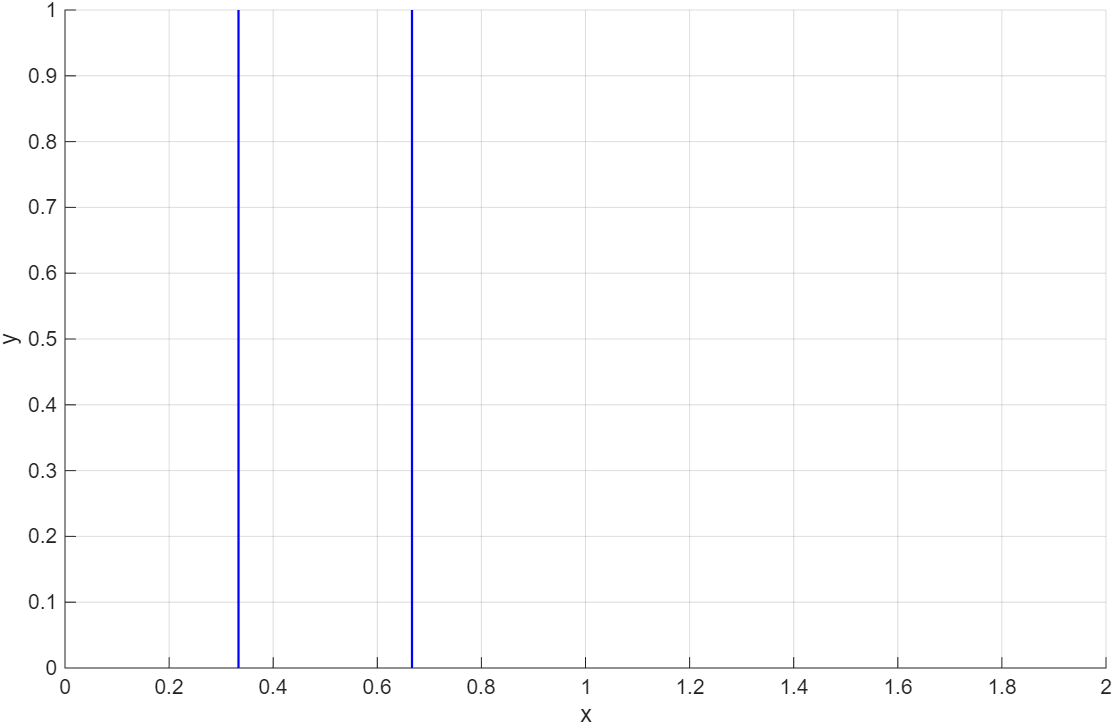}
    \end{subfigure}\\
                    \begin{subfigure}[b]{0.45\textwidth}
        \centering
        \includegraphics[width=\textwidth]{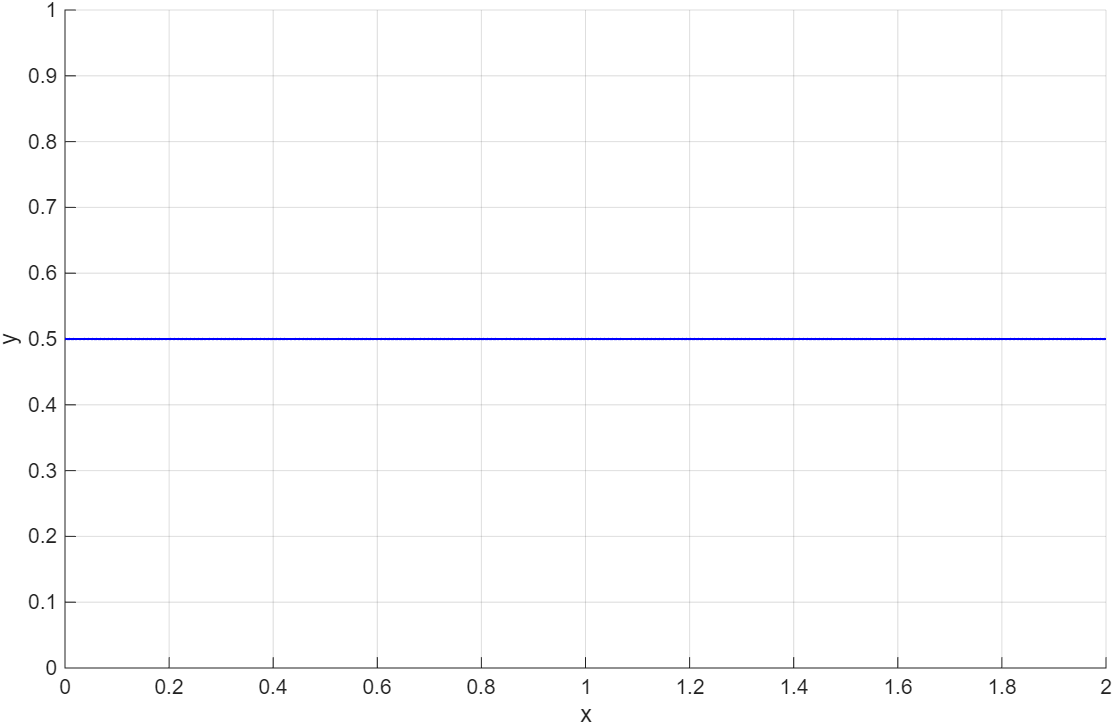}
    \end{subfigure}
    \hfill
    \begin{subfigure}[b]{0.45\textwidth}
        \centering
        \includegraphics[width=\textwidth]{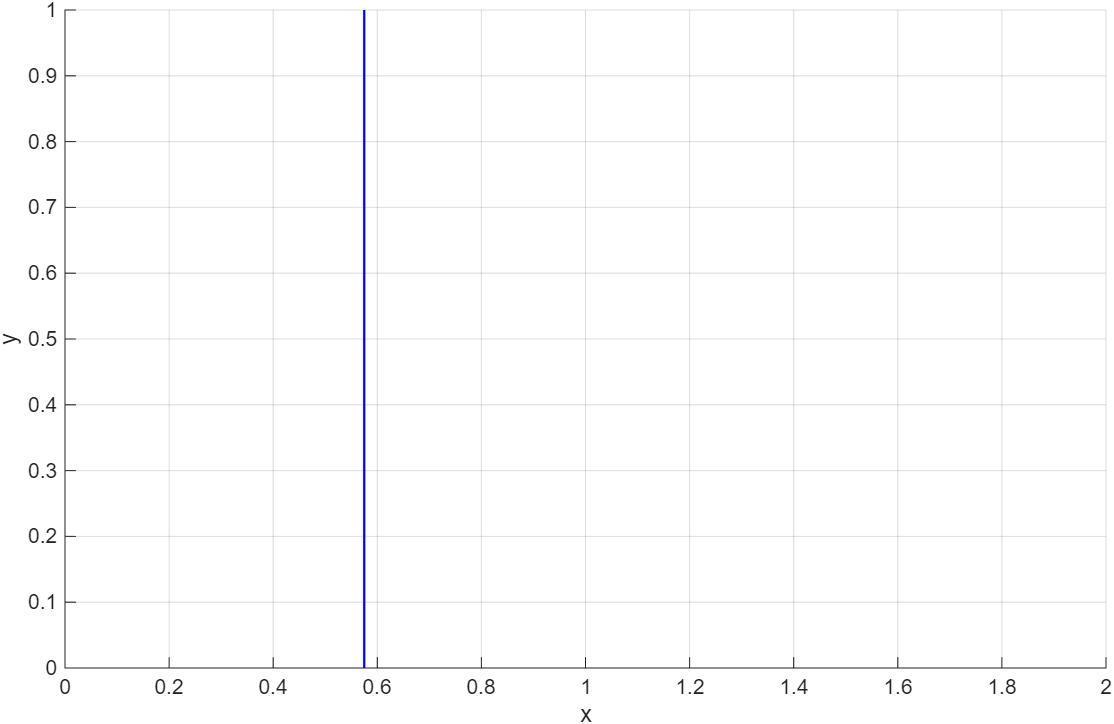}
    \end{subfigure}
    \caption{
    Top: initializations of the breaking lines. Bottom: the final breaking lines after 50 iterations, where $\|\nabla_{\br}{\cal L}(u_{n, \tau})\| = 1.07 \times 10^{-11}$ and $ 2.36 \times 10^{-10}$ on the respective left and right.}\label{fig1aa}
\end{figure}

The above experiments indicate need of two neurons initiated with vertical breaking lines containing the discontinuity interface. Next experiment has $n = 4$ neurons initiated with breaking lines to form a uniform partition of the domain $\Omega$. In the $L^2$ and energy norms, the SgGN method yields a relative error of $8.29\times 10^{-13}$ after only $50$ iterations (see \cref{table1}). Comparing to  the relative error of $5.80\times 10^{-2}$ achieved by $20,000$ iterations of the Adam optimizer in \cite{Cai2021linear}, this improvement is not only in the computational cost but more importantly in accuracy. 
For $n = 4$ neurons, \cref{fig1} depicts the neural network approximation, the traces of the exact solution and the NN approximation on $y=1$, and the final breaking lines. Again, two horizontal breaking lines are not moved \textcolor{black}{due to the fact that the corresponding linear parameters are zero}. 

\begin{table}[ht!]
    \centering
    \caption{Relative errors of the test problem with a vertical discontinuity interface after 50 iterations.}
    \begin{tabular}{c c c c}\label{table1}
        $n$  
        & $\dfrac{\|u - u_{n, \tau}\|_{0, \Omega}}{\|u\|_{0, \Omega}}$ 
        & $\dfrac{\|u - u_{n, \tau}\|_{{\bm \beta}, \Omega}}{\|u\|_{\bm \beta, \Omega}}$ 
         \\
        \hline
        2 (a)   & $3.63\times10^{-1}$  & $3.63\times10^{-1}$\\
        2 (b)   & $3.33\times10^{-1}$  & $3.33\times10^{-1}$
        \\
        4 & $8.29\times10^{-13}$  & $8.29\times10^{-13}$  \\
        \hline
    \end{tabular}
\end{table}

\begin{figure}[htbp!]
    \centering
    \begin{subfigure}[b]{0.31\textwidth}
        \centering
        \includegraphics[width=\textwidth]{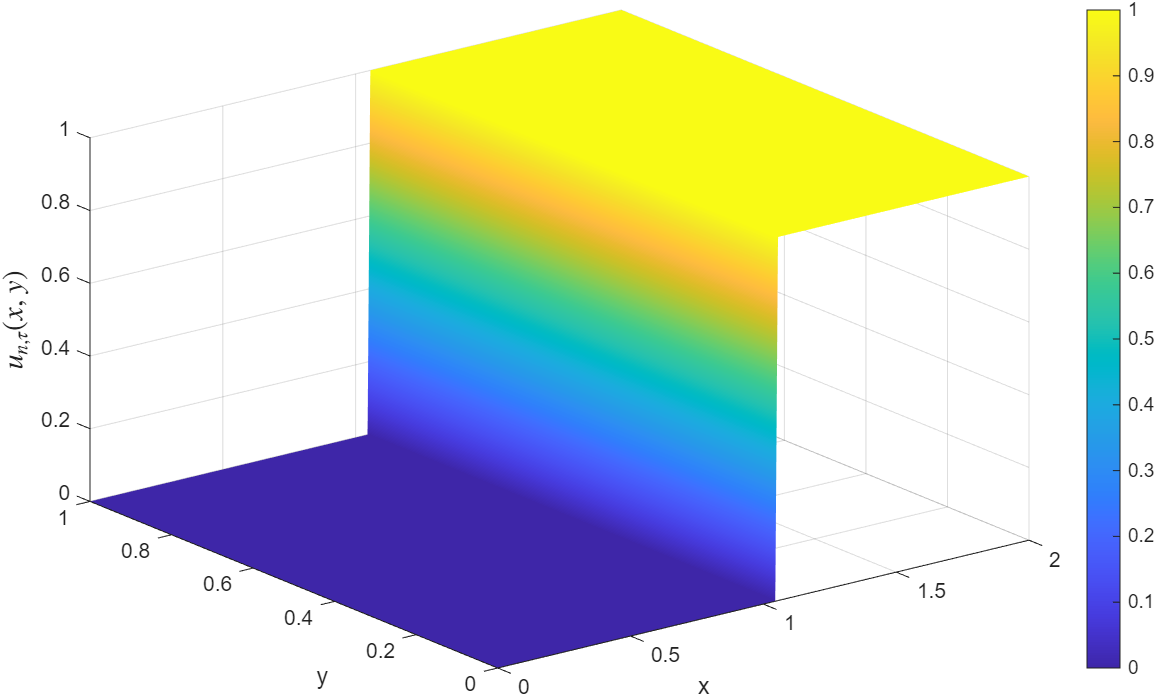}
        \caption{NN approximation $u_{n, \tau}$}\label{fig1a}
    \end{subfigure}
    \hfill
    \begin{subfigure}[b]{0.31\textwidth}
        \centering
        \includegraphics[width=\textwidth]{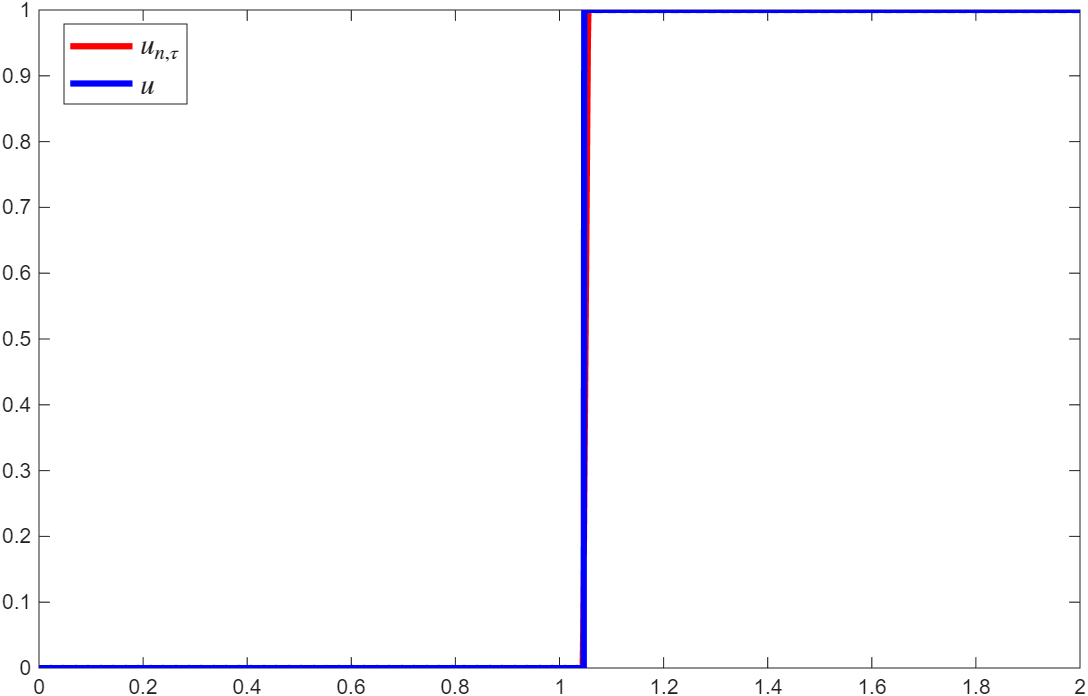}
        \caption{Traces on $y =1$}\label{fig1b}
    \end{subfigure}
    \hfill
    \begin{subfigure}[b]{0.31\textwidth}
        \centering
        \includegraphics[width=\textwidth]{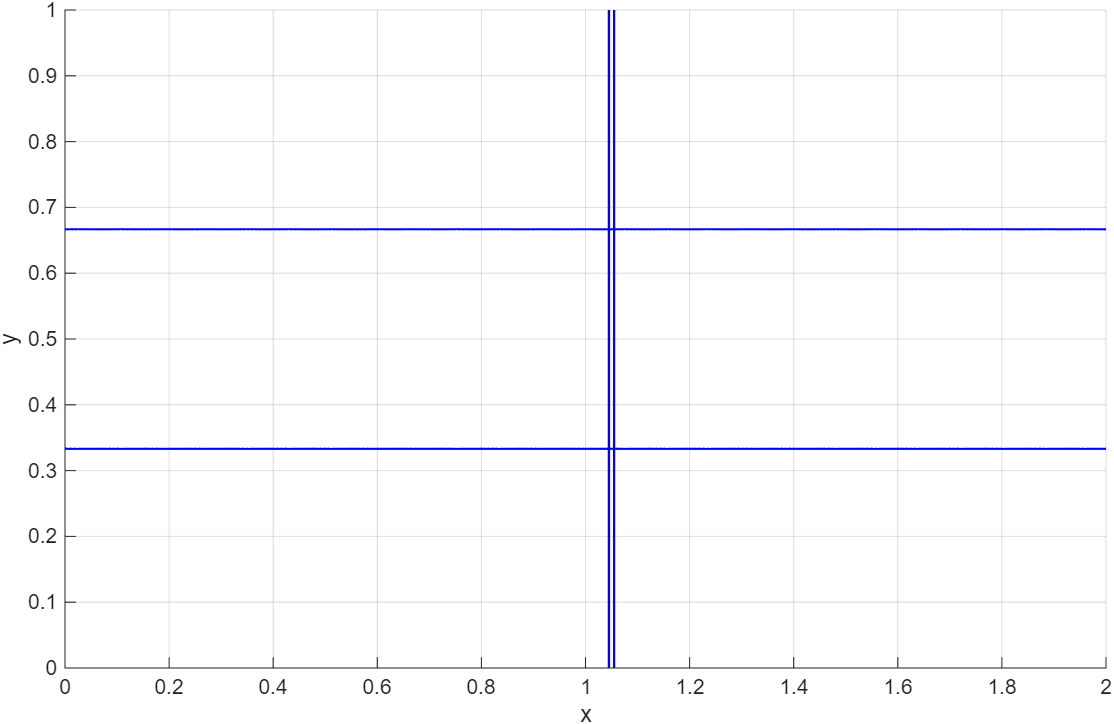}
        \caption{NN breaking lines}\label{fig1c}
    \end{subfigure}
    \caption{Results of $50$ SgGN iterations for $n = 4$ neurons. 
    }\label{fig1}
\end{figure}

\subsubsection{Diagonal Line Interface}
The second test problem is the equation in \cref{pde1} with the domain $\Omega = (-1, 1)^2$, ${\bm \beta} = (1/\sqrt{2}, 1/\sqrt{2})^T$, and $\gamma = 1$. The $\Gamma_{-} = \Gamma_{-}^1 \cup \Gamma_{-}^2$ is the inflow boundary, where $\Gamma_{-}^1 = \bigl\{(-1, y): y \in (-1, 1)\bigr\}$ and $\Gamma_{-}^{2} = \bigl\{(x, -1): x\in (-1, 1)\bigr\}$. 
The $f$ and $g$ and are piecewise constant given by
\begin{equation*}
                f(x, y)   = 
                \begin{cases}
                  1, &  (x, y) \in \Omega_1,  \\[2mm]
                  0, & (x, y) \in \Omega_2 
                \end{cases}  \quad \mbox{and} \quad g(x, y)   = 
                \begin{cases}
                  1, &  (x, y) \in \Gamma_{-}^1 ,  \\[2mm]
                  0, & (x, y) \in \Gamma^2_{-}, 
                \end{cases}  
\end{equation*}
where $\Omega_1 =\{(x, y) \in \Omega : y> x\}$ and $\Omega_2 = \{(x, y) \in \Omega : y< x\}$. The exact solution of this problem is $u(x, y) = f(x, y)$ with a discontinuity interface along the diagonal line $x = y$. 

\begin{figure}[htbp!]
    \centering
    \begin{subfigure}[b]{0.31\textwidth}
        \centering
        \includegraphics[width=\textwidth]{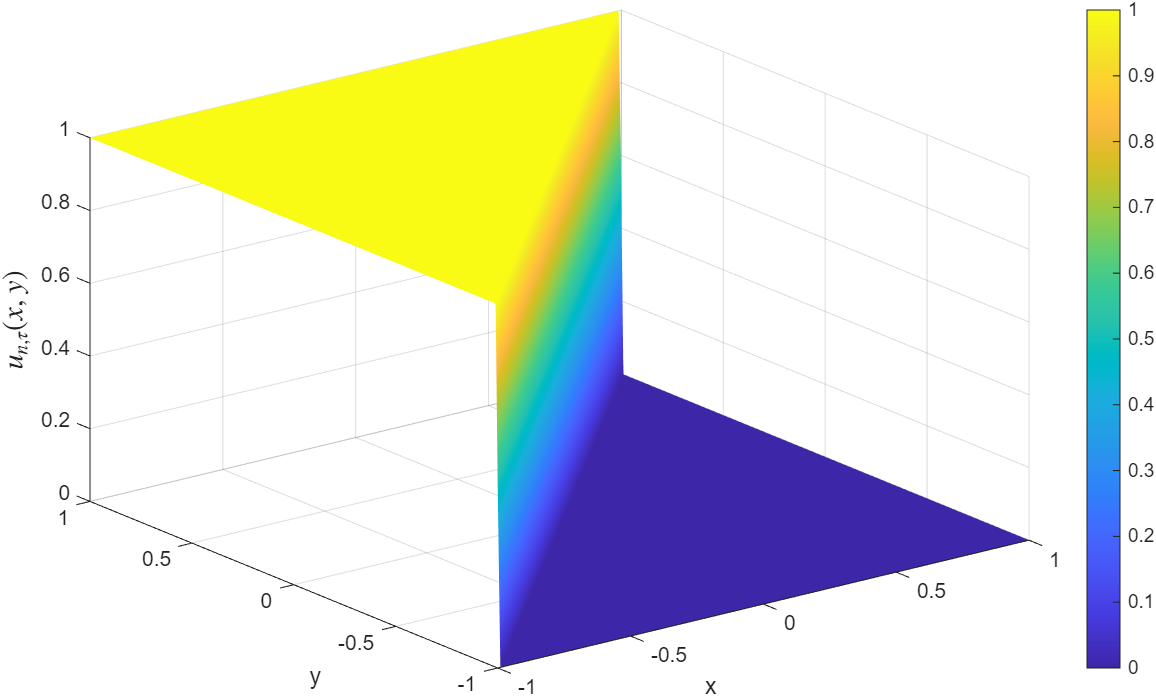}
        \caption{NN approximation $u_{n, \tau}$}\label{fig2a}
    \end{subfigure}
    \hfill
    \begin{subfigure}[b]{0.31\textwidth}
        \centering
        \includegraphics[width=\textwidth]{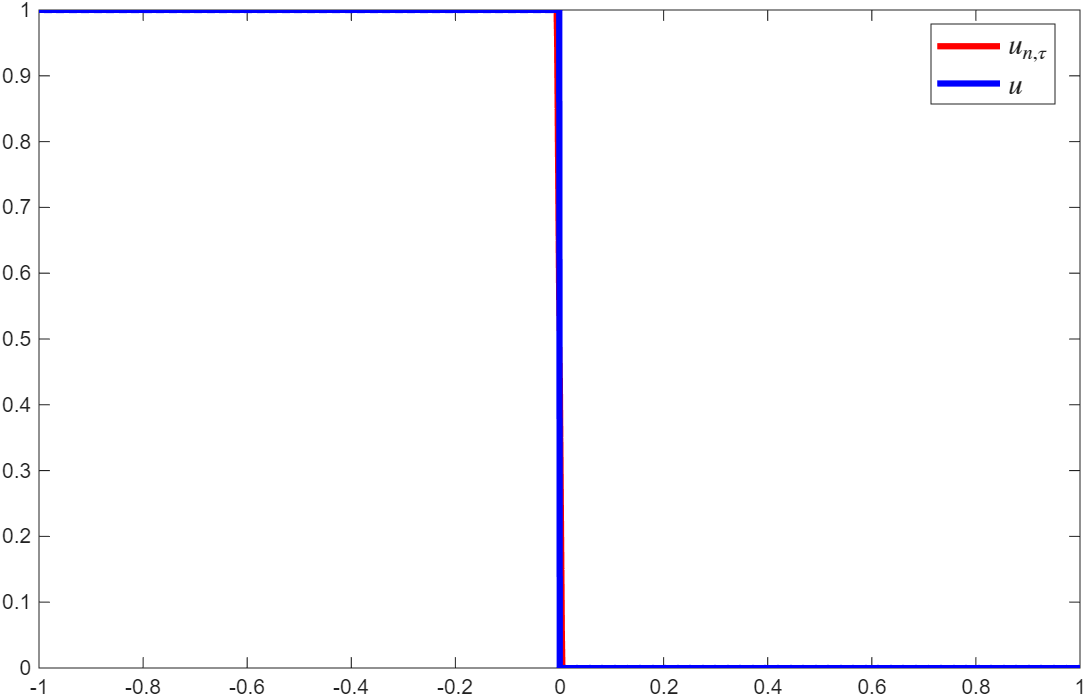}
        \caption{Traces on $y =-x$}\label{fig2b}
    \end{subfigure}
    \hfill
    \begin{subfigure}[b]{0.31\textwidth}
        \centering
        \includegraphics[width=\textwidth]{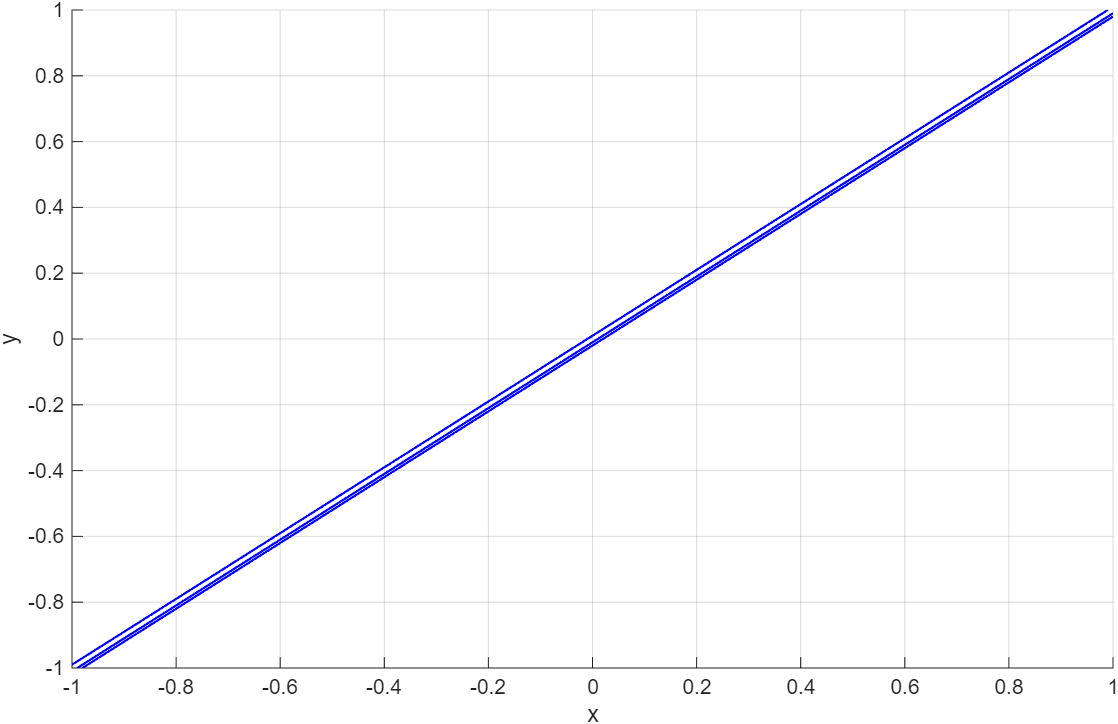}
        \caption{NN breaking lines}\label{fig2c}
    \end{subfigure}
    \caption{Results of $42$ SgGN iterations for $n = 4$ neurons. 
    }\label{fig2}
\end{figure}

For $n=4$ neurons, performance of the SgGN method is similar to the previous test problem: the relative error of $6.26\times 10^{-10}$ in the energy norm achieved by $42$ iterations (see \cref{table2}). Comparing to $20,000$ iterations of the Adam optimizer in \cite{Cai2021linear}, the Adam failed for $n=4$ neurons, and its accuracy is only $7.35\times 10^{-2}$ for $n=6$ neurons. For $n = 4$ neurons, using the stopping criterion $\mathcal{L}_{\tau}(u_{n, \tau}; \bff) \leq  2 \times 10^{-18}$ ($42$ SgGN iterations), the neural network approximation depicted in \cref{fig2a} is super-accurate and free of oscillations. Furthermore, the traces along $y = -x$ shown in \cref{fig2b} demonstrate that the numerical approximation captures the discontinuity accurately. The corresponding breaking lines are plotted in \cref{fig2c}, which are located near the diagonal interface.

Due to dependence on initialization, a natural question is: would increasing the number of neurons reduce the number of the SgGN iterations or improve accuracy if initializing them uniformly? For the stopping criterion $\mathcal{L}_{\tau}(u_{n, \tau}; \bff) \leq 2 \times 10^{-18}$ and $\mathcal{L}_{\tau}(u_{n, \tau}; \bff) \leq 10^{-8}$, \cref{table2} and \cref{table22} show that the answer is negative. Moreover, the resulting relative errors in the energy norm deteriorate for larger values of $n$. However, when a fixed number of 100 iterations is used instead, these relative errors improve, as demonstrated in \cref{table2b}. For this test problem, these findings show that adding more neurons complicates the optimization process and does not necessarily provide significant improvement in accuracy. Furthermore, establishing an effective stopping criterion remains generally difficult.



\begin{table}[ht!]
    \centering
    \caption{Loss functional values and relative errors of the problem with discontinuity along the diagonal,  using the stopping criterion $\mathcal{L}_{\tau}(u_{n, \tau}; \bff) \leq  2 \times 10^{-18}$.}
    \begin{tabular}{c c c c c c}\label{table2}
        $n$ 
        & $\mathcal{L}_{\tau}\big(u_{n, \tau};{\bf f}\big)$ &$\dfrac{\|u - u_{n, \tau}\|_{0, \Omega}}{\|u\|_{0, \Omega}}$ 
        & $\dfrac{\|u - u_{n, \tau}\|_{{\bm \beta}, \Omega}}{\|u\|_{\bm \beta, \Omega}}$ & Iterations
         \\
        \hline
        4 &  $1.69\times 10^{-18}$& $6.58\times10^{-11}$  & $6.26\times10^{-10}$ & $42$ \\
        8 & $6.48\times 10^{-19}$&$2.33\times10^{-11}$  & $7.19\times10^{-10}$ & $42$ \\
        12 & $1.99\times10^{-18}$&$2.64\times10^{-11}$  & $1.20\times10^{-9}$ & $54$ \\
        16 & $1.97\times10^{-18}$ & $9.71\times10^{-11}$  & $1.46\times10^{-5}$ & $21$ \\
        \hline
    \end{tabular}
\end{table}

\begin{table}[ht!]
    \centering
    \caption{Loss functional values and relative errors of the problem with discontinuity along the diagonal, using the stopping criterion $\mathcal{L}_{\tau}(u_{n, \tau}; \bff) \leq 10^{-8}$.}
    \begin{tabular}{c c c c c c}\label{table22}
        $n$ 
        & $\mathcal{L}_{\tau}\big(u_{n, \tau};{\bf f}\big)$ &$\dfrac{\|u - u_{n, \tau}\|_{0, \Omega}}{\|u\|_{0, \Omega}}$ 
        & $\dfrac{\|u - u_{n, \tau}\|_{{\bm \beta}, \Omega}}{\|u\|_{\bm \beta, \Omega}}$ & Iterations
         \\
        \hline
        4 &  $2.70\times 10^{-9}$& $4.17\times10^{-6}$  & $4.20\times10^{-6}$ & $35$ \\
        8 & $6.48\times 10^{-19}$&$2.33\times10^{-11}$  & $7.19\times10^{-10}$ & $42$ \\
        12 & $8.07\times10^{-18}$&$8.57\times10^{-10}$  & $1.29\times10^{-7}$ & $49$ \\
        16 & $1.35\times10^{-12}$ & $4.36\times10^{-7}$  & $8.84\times10^{-4}$ & $17$ \\
        \hline
    \end{tabular}
\end{table}

\begin{table}[ht!]
    \centering
    \caption{Loss functional values and relative errors of the problem with discontinuity along the diagonal after 100 iterations.}
    \begin{tabular}{c c c c }\label{table2b}
        $n$ 
        & $\mathcal{L}_{\tau}\big(u_{n, \tau};{\bf f}\big)$ &$\dfrac{\|u - u_{n, \tau}\|_{0, \Omega}}{\|u\|_{0, \Omega}}$ 
        & $\dfrac{\|u - u_{n, \tau}\|_{{\bm \beta}, \Omega}}{\|u\|_{\bm \beta, \Omega}}$\\
        \hline
        4 &  $1.57\times 10^{-18}$& $6.25\times10^{-11}$  & $6.52\times10^{-10}$ \\
        8 & $4.80\times 10^{-19}$&$5.99\times10^{-12}$  & $5.91\times10^{-10}$ \\
        12 & $2.40\times10^{-18}$&$6.40\times10^{-11}$  & $1.17\times10^{-9}$ \\
        16 & $6.92\times10^{-18}$ & $5.78\times10^{-10}$  & $1.07\times10^{-9}$\\
        \hline
    \end{tabular}
\end{table}

\subsection{Problem with Piecewise Smooth Solution}
The third test problem is discontinuous along the diagonal line $x = y$, and its solution is piecewise smooth. Specifically, 
${\bm \beta} = (1/\sqrt{2}, 1/\sqrt{2})^T$, $\gamma =f = 0$, $\Omega = (0, 1)^2$, and $\Gamma_{-} = \Gamma_{-}^1 \cup \Gamma_{-}^2$, where $\Gamma_{-}^1 = \bigl\{(0, y): y \in (0, 1)\bigr\}$ and $\Gamma_{-}^{2} = \bigl\{(x, 0): x\in (0, 1)\bigr\}$. The inflow data $g$ and the exact solution $u$ are given by
\begin{equation*}
                g(x, y)   = 
                \begin{cases}
                  \sin(y), &  (x, y) \in\Gamma_{-}^1 ,  \\[2mm]
                  \cos(x), & (x, y) \in \Gamma_{-}^2
                \end{cases}
   \quad \mbox{and} \quad u(x, y)   = 
                \begin{cases}
                  \sin(y-x), &  (x, y) \in \bigl\{(x, y) \in \Omega: y > x\bigr\},  \\[2mm]
                  \cos(x-y), & (x, y) \in \bigl\{(x, y) \in \Omega: y < x\bigr\}.
                \end{cases}
\end{equation*}

The exact solution of this problem is depicted in \cref{fig3a}, along with the resulting approximation after 25 SgGN iterations for $n = 24$. The traces of the exact and numerical solutions along the plane $y = 1 - x$ are illustrated in \cref{fig3c}, which exhibit no oscillations. \cref{table3} and \cref{table3b} present the loss functional values and relative errors in both the $L^2$ and energy norms for various $n$ (the number of neurons). As $n$ increases, the values of the functional and the relative error decrease, and the convergence rate of approximation is much better than the linear rate.
Compared to the Adam optimizer in \cite{Cai2021linear}, $30,000$ iterations of the Adam optimizer can only achieve $1.01\times 10^{-1}$ accuracy for $n=40$ neurons, while the accuracy of 25 SgGN iterations is already $1.70\times 10^{-3}$ for $n=12$ neurons and $1.97\times 10^{-4}$ for $n=36$ neurons (see \cref{table3}).

\begin{figure}[htbp!]
    \centering
    \begin{subfigure}[b]{0.45\textwidth}
        \centering
        \includegraphics[width=\textwidth]{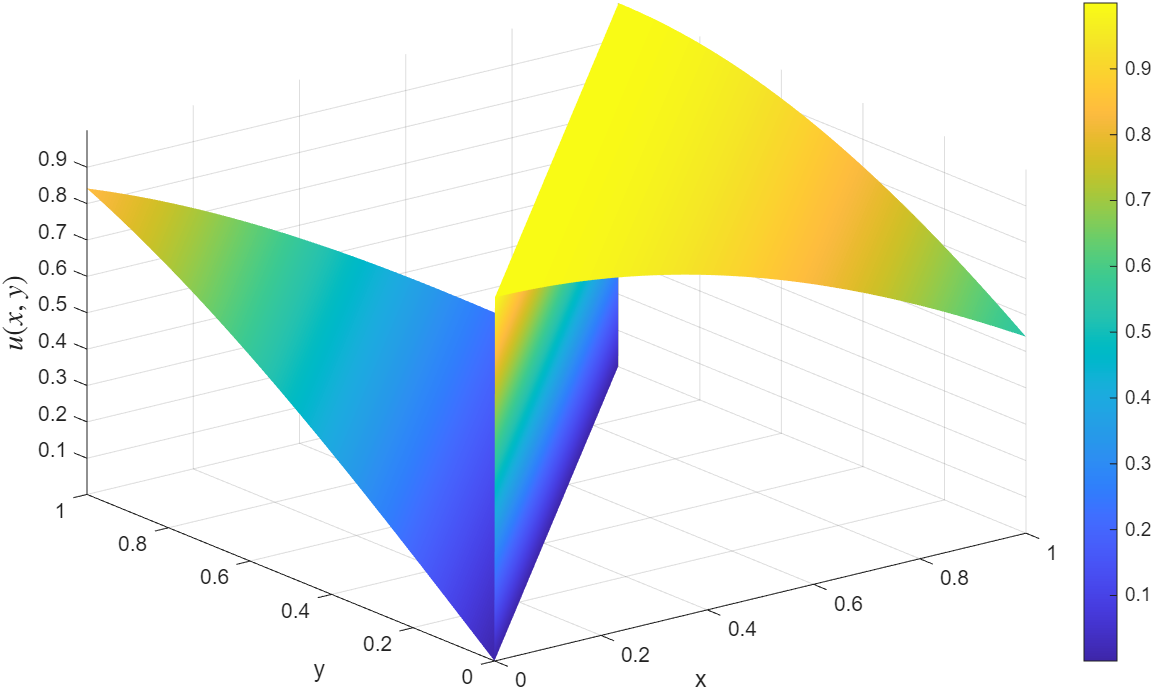}
        \caption{Exact solution}\label{fig3a}
    \end{subfigure}
    \hfill
    \begin{subfigure}[b]{0.45\textwidth}
        \centering
        \includegraphics[width=\textwidth]{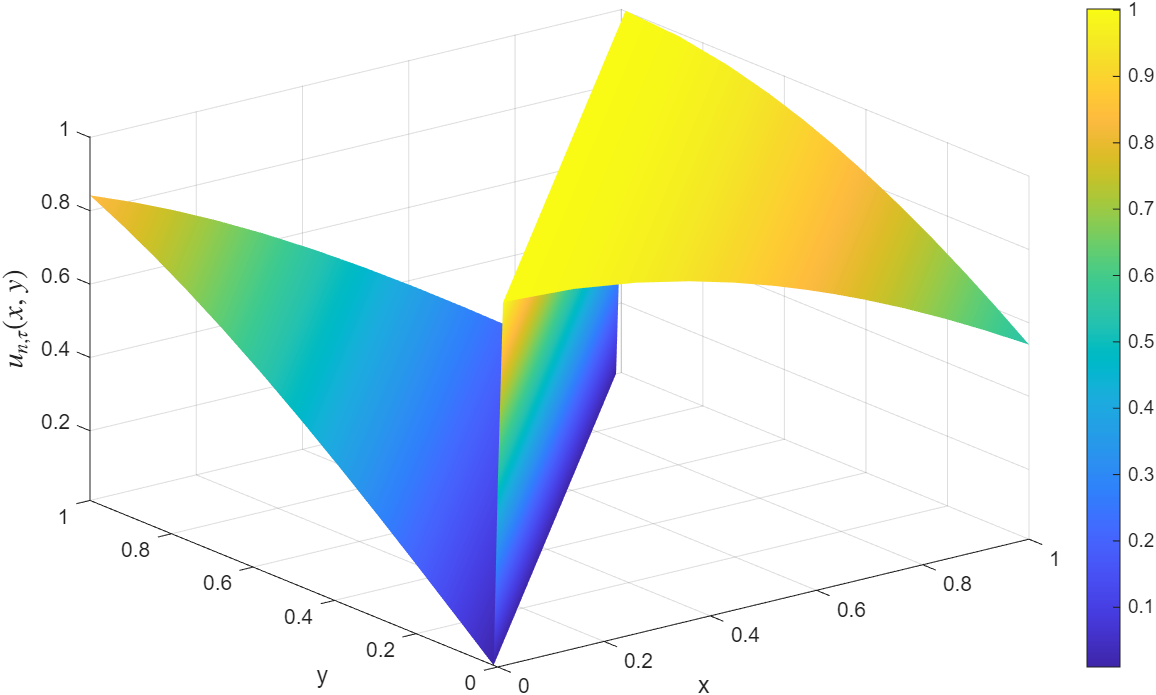}
        \caption{NN approximation $u_{n, \tau}$}\label{fig3b}
    \end{subfigure}\\
        \begin{subfigure}[b]{0.45\textwidth}
        \centering
        \includegraphics[width=\textwidth]{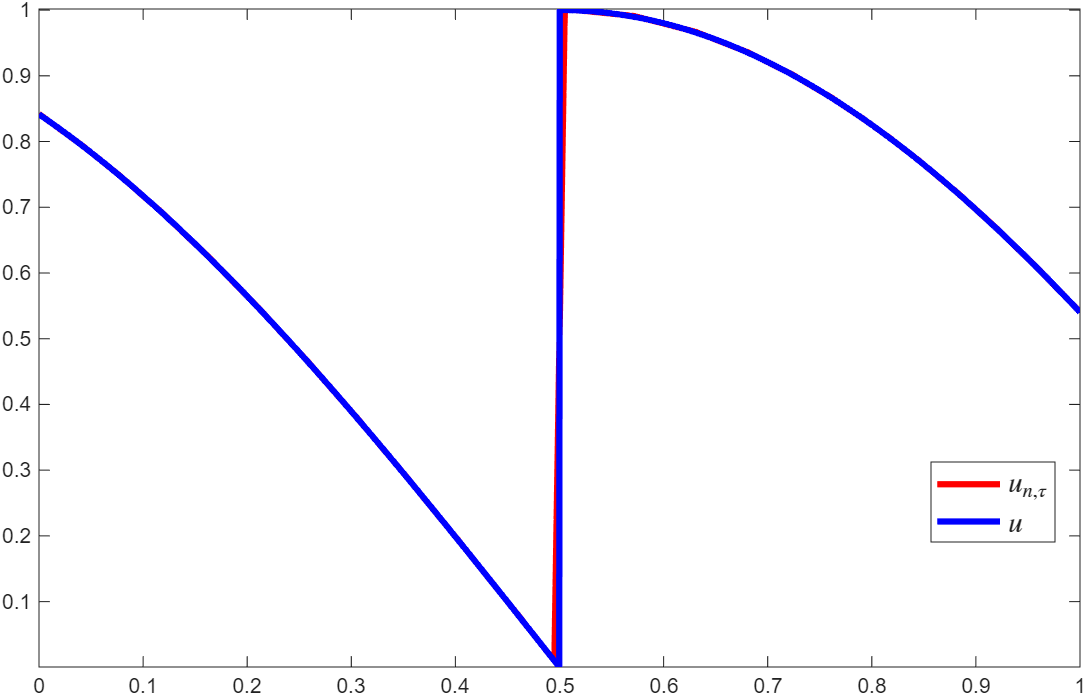}
        \caption{Traces on $y = 1-x$}\label{fig3c}
    \end{subfigure}
    \hfill
    \begin{subfigure}[b]{0.45\textwidth}
        \centering
        \includegraphics[width=\textwidth]{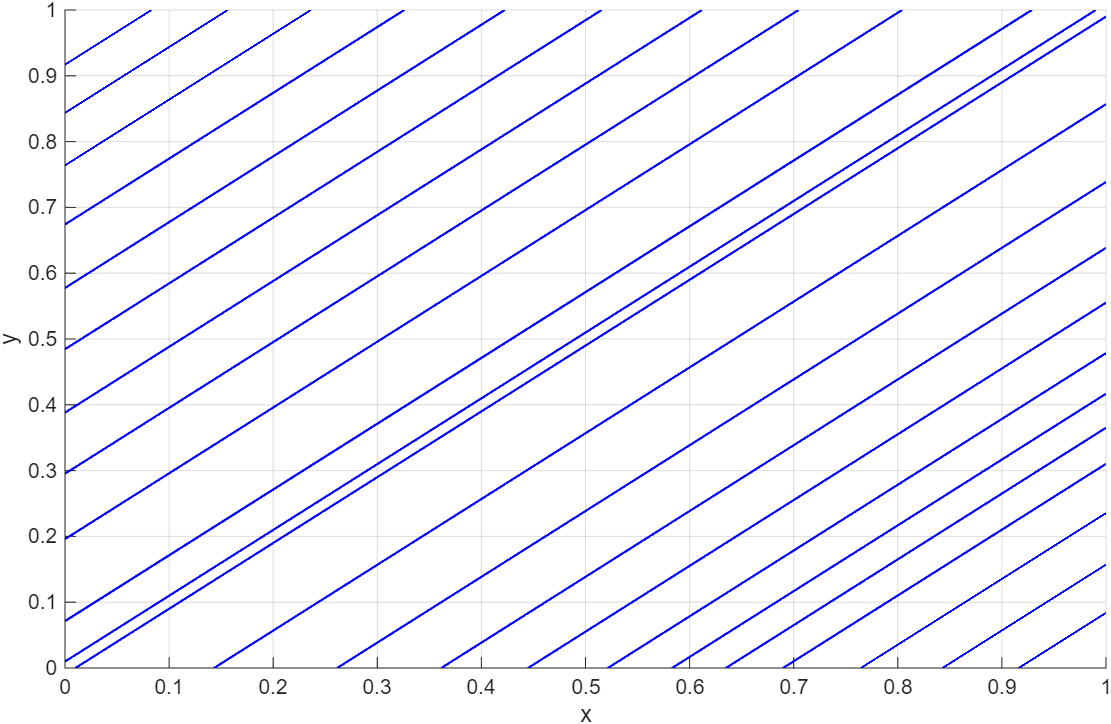}
        \caption{NN breaking lines}\label{fig3d}
    \end{subfigure}
    \caption{Results of $25$ SgGN iterations for $n = 24$ neurons. 
    }\label{fig3}
\end{figure}

Numerical results reported in \cref{table3} and \cref{table3b} are obtained after 25 SgGN iterations and used two different initializations of the nonlinear parameters, respectively. The former starts with a uniform partition of $\Omega$ by the breaking lines, and the latter places the breaking lines parallel to the advection field $\bm{\beta}$ and uniformly partitioning the inflow boundary $\Gamma_{-}$. The geometrical meaning of the SgGN method for the nonlinear parameters is to rotate and shift the breaking lines to the right locations. In the case of the latter, since the initial positions of the breaking lines are parallel to the final positions, the SgGN method only shifts the breaking lines. Because both the initializations achieve similar accuracy (see \cref{table3} and \cref{table3b}), the SgGN method is very effective not only in shifting but also in rotating the breaking lines. This is further confirmed by the fact that the majority of neurons remained active in both cases. Here a neuron is called active if its linear parameter is greater than $\epsilon_c$ (as defined in \cref{setI}).



\begin{table}[ht!]
    \centering
    \caption{Loss functional values and relative errors for the problem with a piecewise smooth solution after 25 iterations, using an initial uniform partition.}
    \begin{tabular}{c c c c c}\label{table3}
        $n$ 
        & Active neurons&$\mathcal{L}_{\tau}\big(u_{n, \tau};{\bf f}\big)$& $\dfrac{\|u - u_{n, \tau}\|_{0, \Omega}}{\|u\|_{0, \Omega}}$ 
        & $\dfrac{\|u - u_{n, \tau}\|_{{\bm \beta}, \Omega}}{\|u\|_{\bm \beta, \Omega}}$ 
         \\
        \hline
        12   & 12 & $4.74\times10^{-4}$& $1.70\times10^{-3}$  & $1.70\times10^{-3}$  \\
        24 & 24 & $2.58\times10^{-5}$&$4.30\times10^{-4}$  & $4.30\times10^{-4}$  \\
        36 & 36 & $5.09\times10^{-6}$& $1.97\times10^{-4}$  & $1.97\times10^{-4}$  \\
        48 & 47 & $3.53\times10^{-6}$&$1.57\times10^{-4}$  & $1.57\times10^{-4}$  \\
        60 & 60 & $1.42\times10^{-6}$& $7.54\times10^{-5}$  & $7.54\times10^{-5}$  \\
        \hline
    \end{tabular}
\end{table}

\begin{table}[ht!]
    \centering
    \caption{Loss functional values and relative errors for the problem with a piecewise smooth solution after 25 iterations, using an initial partition along the advection direction.}
    \begin{tabular}{c c  c c c}\label{table3b}
        $n$ 
        & Active neurons& $\mathcal{L}_{\tau}\big(u_{n, \tau};{\bf f}\big)$ & $\dfrac{\|u - u_{n, \tau}\|_{0, \Omega}}{\|u\|_{0, \Omega}}$ 
        & $\dfrac{\|u - u_{n, \tau}\|_{{\bm \beta}, \Omega}}{\|u\|_{\bm \beta, \Omega}}$ 
         \\
        \hline
        12   & 12 & $3.28 \times 10^{-4}$&$1.36\times10^{-3}$  & $2.68\times10^{-3}$  \\
        24 & 24 & $3.32\times 10^{-5}$& $3.76\times10^{-4}$  & $3.76\times10^{-4}$  \\
        36 & 36 &  $7.78 \times  10^{-6}$&$1.52\times10^{-4}$  & $1.52\times10^{-4}$  \\
        48 & 48 & $1.13 \times 10^{-6}$& $8.52\times10^{-5}$  & $8.52\times10^{-5}$  \\
        60 & 60  & $1.04\times10^{-6}$& $8.14\times10^{-5}$  & $8.14\times10^{-5}$  \\
        \hline
    \end{tabular}
\end{table}

\subsection{Problem with Two Discontinuity Interfaces}
The fourth test problem is piecewise smooth with two discontinuity interfaces. Specifically, ${\bm \beta} = (1/\sqrt{2}, 1/\sqrt{2})^T$, $\gamma = 1$, the domain $\Omega = (-1, 1) \times (0, 1)$, and the inflow boundary $\Gamma_{-} = \bigl\{(x, 0):x\in [-1, 1) \bigr\} \cup\bigl\{(-1, y):y\in(0, 1) \bigr\}$. The right-hand side $f$ and the inflow boundary data $g$ are given by 
\begin{equation*}
                f(x, y)   = 
                \begin{cases}
                  \sin\left(\dfrac{\pi(x-y+0.9)}{0.3}\right), &  (x, y) \in \Upsilon_1  = \bigl\{(x, y) \in \Omega: -0.9 < x-y< -0.6\bigr\},  \\[2mm]
                  -1, & (x, y) \in \Upsilon_2 = \bigl\{(x, y)\in  \Omega: -0.2<x -y<0.1\bigr\},
                  \\[2mm]
                  0, &(x, y) \in  \Omega\backslash\left(\Upsilon_1 \cup \Upsilon_2\right)
                \end{cases}
\end{equation*}
and 
\begin{equation*}
                g(x, y)   = 
                \begin{cases}
                  \sin\left(\dfrac{\pi(x-y+0.9)}{0.3}\right), &  (x, y) \in \Gamma_{-}^1 = \bigl\{(x, 0): x \in (-0.9, -0.6)\bigr\},  \\[2mm]
                  -1, & (x, y) \in \Gamma_{-}^2 = \bigl\{(x, 0): x \in (-0.2, 0.1)\bigr\},
                  \\[2mm]
                  0, &(x, y) \in \Gamma_{-} \backslash\left(\Gamma_{-}^1 \cup \Gamma_{-}^2\right),
                \end{cases}
\end{equation*}
respectively. \cref{pde1} has the exact solution $u = f$.

For $n = 30$ neurons \textcolor{black}{uniformly partitioning the domain $\Omega$}, \cref{fig4b} depicts the NN approximation after $100$ iterations of the SgGN method. The traces of the NN approximation and the exact solution on the plane $y = 0.8$ are shown in \cref{fig4c}. Similar to the other test problems, the resulting approximation exhibits no overshooting while using relatively few degrees of freedom. \cref{table4} again shows that the values of the functional and the relative error decrease as $n$ increases, with a convergence rate of approximation much better than the linear rate. Moreover, the number of active neurons after $100$ iterations is significantly less than that at the initialization. Finally, comparing to $80,000$ Adam iterations reported in \cite{Cai2021linear} (Table~4), the Adam can only achieve the accuracy of $1.20\times 10^{-1}$ with $n=40$ neurons, which is much worse than that of $100$ SgGN iterations even with only $n=15$ neurons (see \cref{table4}). 


Comparing to the third test problem, the accuracy of this problem for the active neurons is slightly worse. This observation motivates \cref{table4n}, where we report the errors in two separate regions. In the region $\Upsilon_1$, the solution $\sin\left(\frac{\pi(x - y + 0.9)}{0.3}\right)$ is smooth and complicated, and hence the error is mainly in this region due to the small number of neurons. In the rest of the region $\Omega \setminus \Upsilon_1$, the solution is piecewise constant, and the NN approximation is extremely accurate as reported in the previous test problems. 



\begin{table}[ht]
    \centering
    \caption{Relative errors of the problem with two discontinuous interfaces after 100 iterations.}
    \begin{tabular}{c c c c c c}\label{table4}
        $n$ & Active neurons 
        & $\mathcal{L}_{\tau}\big(u_{n, \tau};{\bf f}\big)$& $\dfrac{\|u - u_{n, \tau}\|_{0, \Omega}}{\|u\|_{0, \Omega}}$ 
        & $\dfrac{\|u - u_{n, \tau}\|_{{\bm \beta}, \Omega}}{\|u\|_{\bm \beta, \Omega}}$ 
         \\
        \hline
         15  & 11& $6.31\times 10^{-2}$& $2.44\times10^{-2}$   & $2.44\times10^{-2}$  \\
     20 & 12&$7.54\times 10^{-3}$ &$7.45\times10^{-3}$  & $7.45\times10^{-3}$  \\
          30 &21 & $2.48 \times 10^{-3}$&$2.80\times10^{-3}$  & $2.80\times10^{-3}$ \\
        \hline
    \end{tabular}
\end{table}


\begin{table}[ht]
    \centering
    \caption{Numerical errors across different regions of $\Omega$ after 100 iterations for the problem with two discontinuous interfaces.}
    \begin{tabular}{c c c c c c}\label{table4n}
        $n$ & $\|u - u_{n, \tau}\|_{0, \Upsilon_1}$ 
        & $\|u - u_{n, \tau}\|_{{\bm \beta}, \Upsilon_1}$& $\|u - u_{n, \tau}\|_{0, \Omega \backslash\Upsilon_1}$ 
        & $\|u - u_{n, \tau}\|_{{\bm \beta}, \Omega \backslash\Upsilon_1}$ 
         \\
        \hline
         15  & $1.62\times 10^{-2}$& $1.62\times 10^{-2}$& $0$  & $3.29\times10^{-10}$  \\
     20 & $4.92\times 10^{-3}$&$4.96\times 10^{-3}$ &$0$  & $1.25\times10^{-9}$  \\
          30 &$1.86\times10^{-3}$& $1.86\times10^{-3}$& $0$  & $5.39\times10^{-10}$ \\
        \hline
    \end{tabular}
\end{table}


\begin{figure}[htbp!]
    \centering
    \begin{subfigure}[b]{0.45\textwidth}
        \centering
        \includegraphics[width=\textwidth]{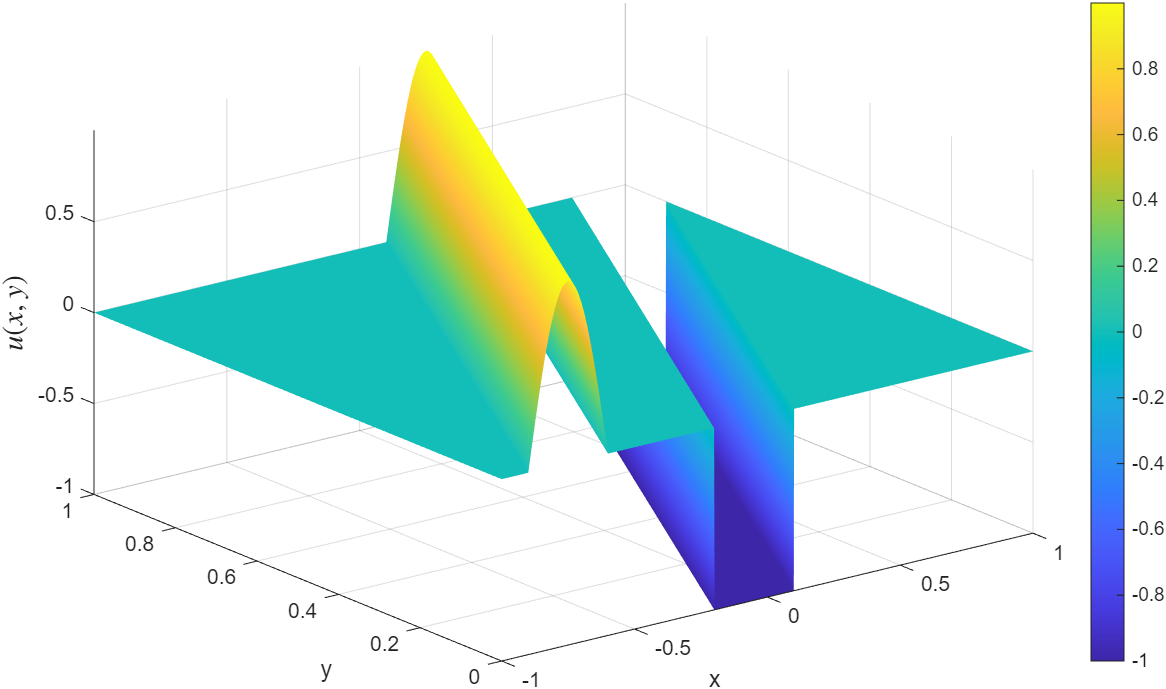}
        \caption{Exact Solution}\label{fig4a}
    \end{subfigure}
    \hfill
    \begin{subfigure}[b]{0.45\textwidth}
        \centering
    \includegraphics[width=\textwidth]{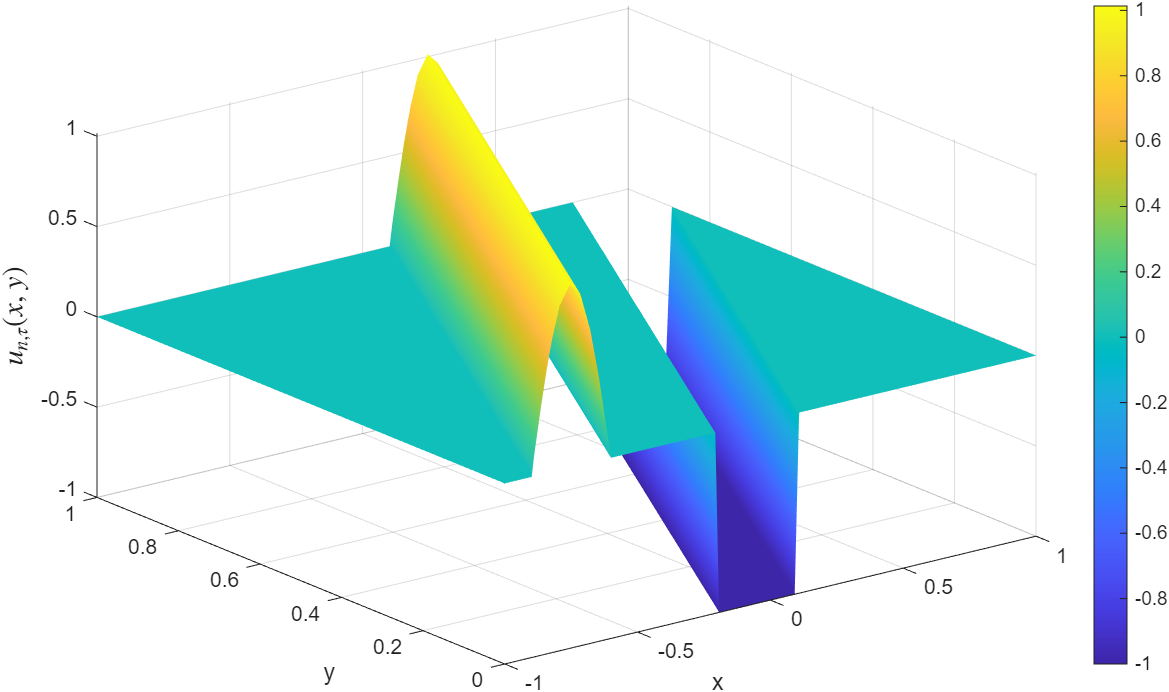}
        \caption{NN Approximation}
        \label{fig4b}
    \end{subfigure}\\
        \begin{subfigure}[b]{0.45\textwidth}
        \centering
        \includegraphics[width=\textwidth]{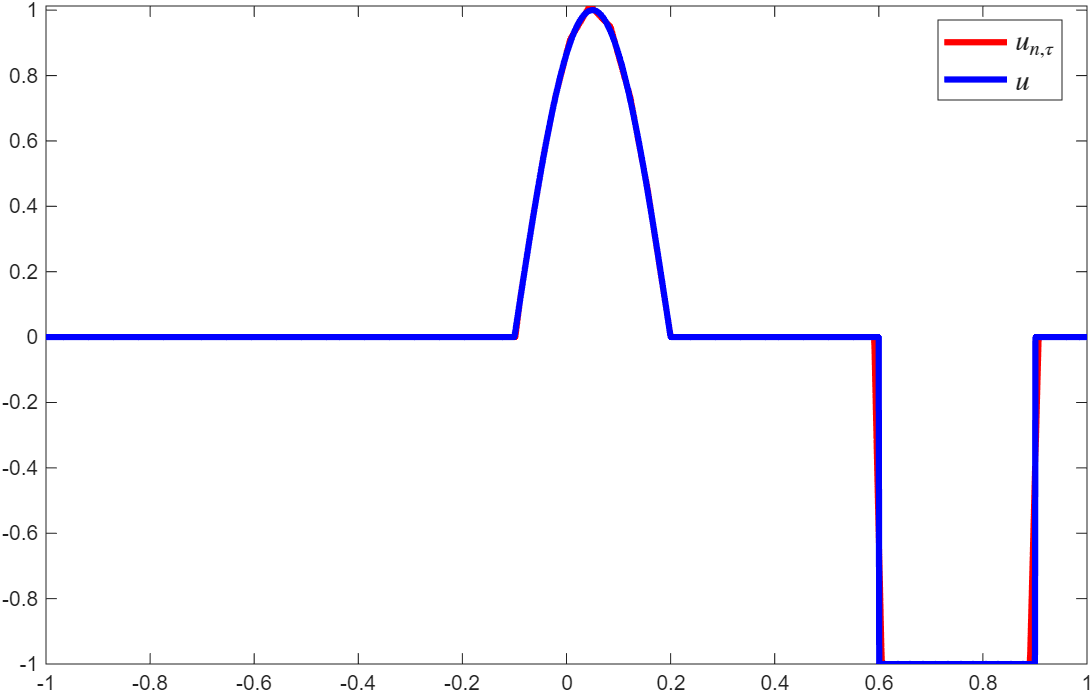}
        \caption{Traces on $y = 0.8$}\label{fig4c}
    \end{subfigure}
    \hfill
    \begin{subfigure}[b]{0.45\textwidth}
        \centering
        \includegraphics[width=\textwidth]{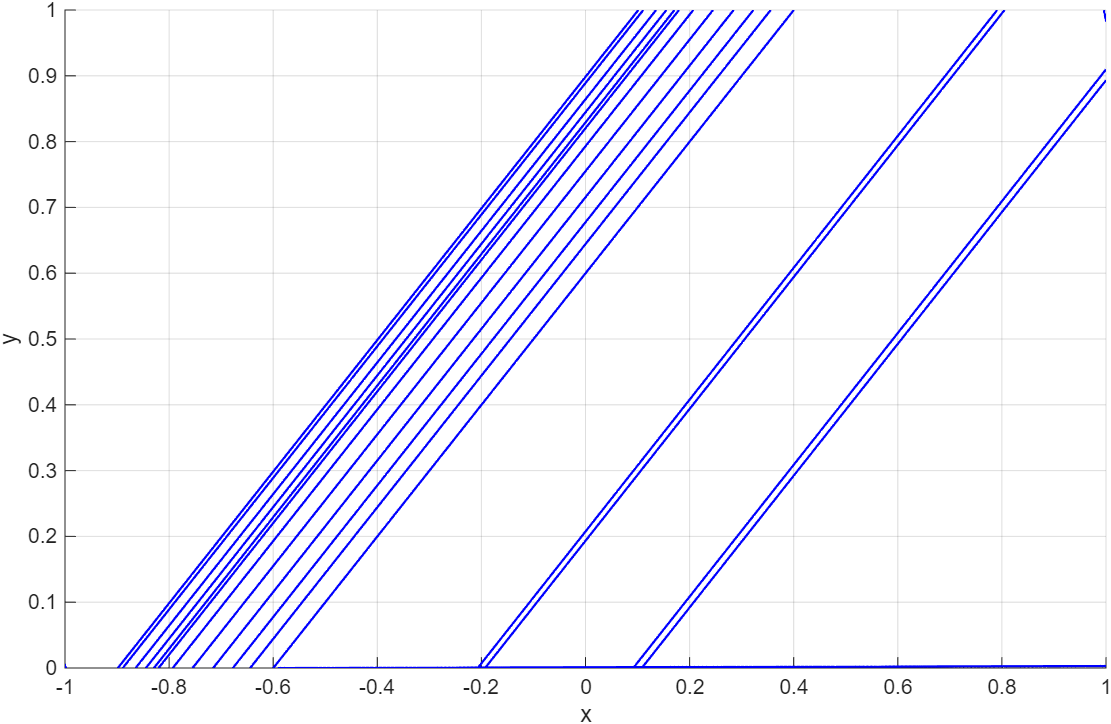}
        \caption{NN breaking lines}\label{fig4d}
    \end{subfigure}
    \caption{Results of $100$ SgGN iterations for $n = 30$ neurons. 
    }\label{example2DR}
\end{figure}

\section{Conclusions}\label{sec:conclusion}

A SgGN method was derived for solving the resulting non-convex optimization problem arising from the LSNN method for the two-dimensional linear advection-reaction equation when using shallow ReLU neural networks. Extension of the method to multi-dimension is straightforward.
The method was implemented for all test problems in \cite{Cai2021linear}. Numerical experiments demonstrate that the SgGN method is much more efficient and accurate than the Adam optimizer. First, the SgGN method can achieve the same accuracy with much less computational cost than the Adam. While the Adam stuck at $10^{-2}$ or $10^{-1}$ accuracy after $20,000$ to $80,000$ iterations, the SgGN achieves $10^{-10}$ to $10^{-5}$ accuracy after $100$ iterations. Geometrically, the SgGN can rotate and shift the breaking lines very efficiently. 

However, due to the ill-condition of the stiffness and GN matrices, we need to use the truncated SVD to solve the corresponding algebraic systems at each iteration step. This is reasonably expensive, but it is compensated by the fact that neural networks use significantly fewer degrees of freedom than standard mesh-based numerical methods.

\bigskip
\bibliographystyle{ieee}
\bibliography{Reference}

\end{document}

%% file: ex_shared.tex
\usepackage{lipsum}
\usepackage{amsfonts}
\usepackage{graphicx}
\usepackage{epstopdf}
\usepackage{algorithmic}
\ifpdf
  \DeclareGraphicsExtensions{.eps,.pdf,.png,.jpg}
\else
  \DeclareGraphicsExtensions{.eps}
\fi

\newsiamremark{hypothesis}{Hypothesis}
\crefname{hypothesis}{Hypothesis}{Hypotheses}
\newsiamthm{claim}{Claim}

\title{Structure-Guided Gauss-Newton Method: \\ Linear Advection-Reaction Equation}

\author{Zhiqiang Cai\thanks{Department of Mathematics, Purdue University, West Lafayette, IN
    (\email{caiz@purdue.edu},  \email{herre125@purdue.edu}).}
\and César Herrera\footnotemark[1]
}

\usepackage{amsopn}